\documentclass{amsart}
\usepackage{palatino, mathpazo}
\usepackage{mathrsfs}

\usepackage[colorlinks=true,]{hyperref}

\usepackage{amsthm, amsmath, amscd,amssymb}
\usepackage[all]{xy}


\newtheorem{thm}{Theorem}[section]
\newtheorem{defn}[thm]{Definition}
\newtheorem{prop}[thm]{Proposition}
\newtheorem{lem}[thm]{Lemma}

\theoremstyle{remark}
\newtheorem{remark}[thm]{Remark}
\newtheorem{rmk}[thm]{Remark}



\newcommand{\bA}{\mathbb{A}}
\newcommand{\bC}{\mathbb{C}}
\newcommand{\bP}{\mathbb{P}}
\newcommand{\bL}{\mathbb{L}}
\newcommand{\bT}{\mathbb{T}}

\newcommand{\bQ}{\mathbb{Q}}

\newcommand{\cA}{\mathcal{A}}

\newcommand{\cM}{\mathcal{M}}
\newcommand{\cI}{\mathcal{I}}
\newcommand{\cO}{\mathcal{O}}
\newcommand{\cP}{\mathcal{P}}

\newcommand{\cT}{\mathcal{T}}

\newcommand{\sF}{\mathscr{F}}

\newcommand{\fC}{\mathfrak{C}}
\newcommand{\fD}{\mathfrak{D}}
\newcommand{\fE}{\mathfrak{E}}
\newcommand{\fF}{\mathfrak{F}}

\newcommand{\fT}{\mathfrak{T}}
\newcommand{\fX}{\mathfrak{X}}
\newcommand{\fY}{\mathfrak{Y}}

\newcommand{\hilb}{\mathrm{Hilb}}

\newcommand{\D}{M^{\circ}}

\newcommand{\lp}{\left(}
\newcommand{\rp}{\right)}
\newcommand{\vir}{\mathrm{vir}}
\newcommand{\opk}{\mathrm{op}K^{0}}
\newcommand{\qcoh}{\mathrm{qcoh}}
\newcommand{\coh}{\mathrm{coh}}

\newcommand{\fix}{\mathrm{fix}}
\newcommand{\mov}{\mathrm{mov}}
\newcommand{\kth}{\iota_k}

\DeclareMathOperator{\rhom}{R\mathcal{H}om}

\DeclareMathOperator{\Spec}{\mathrm{Spec}}

\DeclareMathOperator{\gl}{gl}
\DeclareMathOperator{\pr}{pr}
\DeclareMathOperator{\ev}{ev}
\DeclareMathOperator{\id}{id}
\DeclareMathOperator{\can}{can}

\DeclareMathOperator{\tor}{Tor}
\DeclareMathOperator{\Coh}{Coh}

\begin{document}

\title{virtual pullbacks in $K$-theory}
\begin{abstract}
We consider virtual pullbacks in $K$-theory, and show that they are bivariant classes and satisfy certain functoriality.
As applications to $K$-theoretic counting invariants, we include proofs of a virtual localization formula for schemes and a degeneration formula in Donaldson-Thomas
theory.

\end{abstract}
\author{F.~Qu}
\address{}
\email{fengquest@gmail.com}

\subjclass[2010]{14C17;14A20,14C35,14N35}

\maketitle

\section*{Introduction}

\subsection{}

Virtual pullbacks were introduced and developed for Chow groups in \cite{vpb}, we
work out  parallel results for $K_0$ groups of coherent sheaves. $K$-theoretic virtual pullbacks also give rise to bivariant classes (cf. \cite[Definition 4.1]{AS})
and satisfy functoriality.  To prove these results, we follow the arguments in \cite{vpb}, \cite{KKP}, and \cite{Fu}.

As localization and degeneration techniques are fundamental in curve-counting theories, we also include  
proofs of a $K$-theoretic virtual localization formula for schemes and a degeneration formula in Donaldson-Thomas (DT) theory. These formulas are known and straightforward to prove given their cycle versions. 
For applications to $K$-theoretic computations, see e.g., \cite{Ok}.

\subsection{}
We work over a field $k$, 
 schemes and algebraic stacks are over $k$ and (locally) of finite type.


\subsection{}
The mechanism of virtual pullbacks is the same as that of Gysin pullbacks along regular embeddings.

Given a closed immersion between schemes $f\colon X \to Y $, we have a deformation space
$\D_f$.  It is a scheme flat  over $\bP^1$ and the diagram below is cartesian
\begin{equation} \label{ds}
\xymatrix{
C_f \ar@{^{(}->}[r]^{i} \ar[d] & \D_f \ar[d] & Y\times \bA^1\ar[d]\ar@{_{(}->}[l]_{j}\\
\{\infty\} \ar@{^{(}->}[r]  & \bP^1 & \bA^1\ar@{_{(}->}[l] &.
}
\end{equation}
Here $C_f$ is the normal cone of $f$.
 (See \cite[Chapter 5]{Fu}.)
 
When $Y$ is of finite type,  
we can define deformation to the normal cone map
\[
\sigma_f= i^*\circ {j^*}^{-1} \circ \pr^*: A(Y) \to A(C_f)
\] using the diagram
\begin{equation} \label{loc}
\xymatrix{
A(C_f) \ar[r]^{i_*} & A(\D_f) \ar[r]^{j^*} \ar[d]^{i^*} & A(Y \times \bA^1) \ar[r] & 0 \\
                 &A(C_f)           & A(Y)\ar[u]_{\pr^*}\ar@{-->}[l]_{\sigma_f}             &. \\
} 
\end{equation}
Here $A(\cdot)$ denotes the Chow group functor.

For any cartesian diagram
\begin{equation}
\xymatrix{
X' \ar[r]^-{g}\ar[d] & Y'\ar[d]\\
X \ar[r]^f & Y,
}
\end{equation}
we have a closed immersion $\iota\colon C_g \hookrightarrow C_f \times_X X'$.
When $f$ is a regular embedding, $C_f$ is a vector bundle, and the map $\iota$ embeds
$C_g$ into a vector bundle over $X'$. 
Now we can define the Gysin pullback
\[
\xymatrix{ g^!\colon A(Y')\ar[r]^-{\sigma_{g}}& A(C_{g} )\ar[r]^-{\iota_*} & A( C_f \times_X X') \ar[r]& A(X').
}\]
The last map $A( C_f \times_X X') \to A(X')$ is the Thom isomorphism.

Then the pullbacks $\{ g^!\colon A(Y') \to A(X')\}$ defines a bivariant class and such 
classes further satisfy a functoriality. Recall a bivariant class (\cite[Chapter 17]{Fu}) for $f\colon X \to Y$ is given by a collection of maps 
$\{ c_\nu \colon A(Y') \to A(X \times_Y Y') \}$ indexed by $\nu \colon Y' \to Y$, compatible with
proper pushforwards, flat pull backs, and the functoriality is the statement that
for a composition of regular embeddings
\[
\xymatrix{
X \ar[r]^i & Y \ar[r]^j  &Z, 
}
\]
we have $i^! \circ j^! = (j\circ i)^!$. 

It is clear that to define the bivariant class $f^!$, the ingredients are deformation spaces, embeddings of normal cones into vector bundles, and a homology theory.
As perfect obstruction theories induce embeddings of normal cones into vector bundle 
stacks, and deformation spaces and Chow groups are extended to Artin stacks by Kresch's work \cite{K1,K2}, the above construction can be generalized.

More precisely, given a map $f\colon X \to Y $ between algebraic stacks of finite type over $k$ such that $X \to X \times_Y X$ being unramified, we have an algebraic stack
$\D_f$ as in \eqref{ds}, with $C_f$ being the intrinsic normal cone (\cite{BF}) for $f$.

We have deformation to the normal cone map
\[\sigma_f\colon A(Y) \to A(C_f).\]
Together with a closed embedding $\iota\colon C_f \to \fE_f$ of $C_f$ into a vector stack $\fE_f$, 
the virtual pullback
\[
\xymatrix{ f^!\colon A(Y)\ar[r]^{\sigma_{f}}& A(C_f) \ar[r]^-{\iota_*} & A( \fE_f) \ar[r]& A(X')
}\] is introduced in \cite{vpb}.

\begin{remark}
A closed embedding $\iota\colon C_f \to \fE_f$
corresponds to a perfect obstruction theory for $f$.

The functoriality of virtual pullbacks depends on compatibilities between perfect obstruction theories (\cite{BF,vpb}).
See Proposition \ref{prop:func} below for a precise statement.

\end{remark}


\subsection{}
In this note, instead of Chow groups, we work with $K_0$ groups of coherent sheaves.
In Section 1, we recall relevant definitions including DM morphisms, perfect obstruction theories, and bivariant classes, and collect some results on 
$K_0$ groups of algebraic stacks and deformation spaces.
Section \ref{s2} concerns virtual pullbacks. Bivariance follows from properties of the deformation space functor $\D$, while functoriality relies  furthermore on
\cite[Proposition 1]{KKP} and requires some efforts to prove. In Section \ref{s3},  
a localization formula for schemes is proved by the method of \cite{CKL}. In  Section \ref{s4}, we indicate
how arguments in \cite{LW,MPT} lead to a degeneration formula in DT theory.

\section{Preliminaries}

\subsection{DM morphisms}
A morphism $f\colon X \to Y$ between algebraic stacks (
\cite[Tag 026O]{Stacks}) 
is DM 
(\cite[Tag 04YW]{Stacks})
if $\Delta_f\colon X \to X \times_Y X$ is unramified. 
Then for any morphism $Z \to Y$ from an algebraic space $Z$, $X \times_Y Z$ is a DM stack 
(\cite[
03YO]
{Stacks}).
In particular, when $X$ is a DM stack, $f$ is DM.

When $f$ is DM, we can represent it as a map between groupoids (in algebraic spaces)
$f_\bullet\colon X_\bullet \to Y_\bullet$ such that $f_0\colon X_0 \to Y_0$ and $f_1\colon X_1 \to Y_1$ are unramified. In fact, there exists a commutative diagram
\[
\xymatrix{
X_0 \ar[r]^{f_0}\ar[d]  & Y_0\ar[d]\\
X \ar[r]               & Y
}\] such that vertical arrows are smooth surjective and $f_0$ is a disjoint union of closed immersions between affine schemes
(cf. \cite[Lemma 2.27]{vpb}). Then $f_1=f_0 \times_f f_0$ is unramified, which is easy to see using
the diagram
\[
\xymatrix{
X_1=X_0\times_X X_0 \ar[r]\ar[d] & X_0 \times_{Y} X_0\ar[d]\ar[r] & Y_1=Y_0\times_Y Y_0 \\
X \ar[r]             & X \times_Y X.
}
\]



\subsection{Deformation spaces}

To each DM morphism $f\colon X \to Y$ between algebraic stacks, we have a deformation space $\D_f$.
It is a flat family over $\bP^1$ whose fiber over $\{\infty\}$ is the intrinsic normal $C_f$, and over 
$\bA^1=\bP^1-\{\infty\}$,
 it is isomorphic to the product $Y\times \bA^1$.
 
For a closed immersion between schemes, $\D_f$ is constructed in \cite[Chapter 5]{Fu}.
In general, $\D_f$ is constructed by descent  (\cite{K1,K2,KKP}).
First, the construction of $\D$ as algebraic spaces for unramified morphisms between algebraic spaces is achieved by
using \'etale groupoids in schemes, as unramified morphisms are \'etale locally immersions. In general, we can represent $f$ as a map between groupoids
$f_\bullet\colon X_\bullet \to Y_\bullet$, such that $f_0,f_1$ are unramified, then
$\D_f$ is the stack associated to the smooth groupoid $\D_{f_1} \rightrightarrows\D_{f_0}$. 

\begin{lem} \label{qcqs}
Given a DM morphism $f\colon X \to Y$.  The deformation space
 $\D_f$ is quasi-compact, quasi-separated (qcqs) if and only if
$X$ and $Y$ both are.
\end{lem}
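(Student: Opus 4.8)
The plan is to prove the two implications by different means, in both cases using the structure map $\pi\colon \D_f \to \bP^1$ together with the two facts recorded above: the fibre of $\pi$ over a point of $\bA^1$ is $Y$, while the fibre over $\infty$ is $C_f$.

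For the \emph{only if} direction I would exploit that $\bP^1$ is separated. For any $k$-rational point $v\colon \Spec k \to \bP^1$ the projection $\D_f \times_{\bP^1}\Spec k \to \D_f$ is the base change of the closed immersion $v$, hence is itself a closed immersion. Taking $v = 1 \in \bA^1$ realizes $Y$ as a closed substack of $\D_f$, and taking $v = \infty$ realizes $C_f$ as a closed substack; since a closed substack of a qcqs stack is qcqs, both $Y$ and $C_f$ are qcqs. It then remains to pass from $C_f$ to $X$: the zero section $X \hookrightarrow C_f$ is a closed immersion (it is cut out by the augmentation of the cone), so $X$ is qcqs as well. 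This direction uses nothing about the construction of $\D_f$ beyond its flatness over $\bP^1$ and the identification of the two fibres.

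For the \emph{if} direction I would descend along the groupoid presentation $\D_f = [\D_{f_1} \rightrightarrows \D_{f_0}]$, where $f_i\colon X_i \to Y_i$ are unramified morphisms of algebraic spaces. Because $X$ and $Y$ are qcqs, the presentation $f_\bullet\colon X_\bullet \to Y_\bullet$ can be chosen with all of $X_0, Y_0, X_1, Y_1$ qcqs: quasi-compactness of $X, Y$ yields quasi-compact atlases $X_0, Y_0$ (by the construction recalled before Lemma~\ref{qcqs} we may even take $f_0$ to be a finite disjoint union of closed immersions of affines), and quasi-separatedness makes $X_1 = X_0 \times_X X_0$ and $Y_1 = Y_0 \times_Y Y_0$ quasi-compact. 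The argument then has two steps. First, a base case: for an unramified morphism $g\colon X_i \to Y_i$ of qcqs spaces the deformation space $\D_g$ is qcqs. Indeed, \'etale-locally on $Y_i$ the morphism $g$ is a closed immersion, for which $\D_g$ is an open subscheme of the blow-up of $Y_i \times \bP^1$ along $X_i \times \{\infty\}$; as all spaces are of finite type over $k$ this blow-up is of finite type over $Y_i \times \bP^1$, so $\D_g \to Y_i \times \bP^1$ is qcqs, and since $Y_i \times \bP^1$ is qcqs finitely many such charts patch to a qcqs $\D_g$. This gives $\D_{f_0}$ and $\D_{f_1}$ qcqs. Second, a descent step: a stack $[\cV_1 \rightrightarrows \cV_0]$ presented by a smooth groupoid in qcqs algebraic spaces is qcqs, because the smooth surjection $\cV_0 \to \D_f$ from a quasi-compact space forces quasi-compactness, while the base change of the diagonal along the smooth cover $\cV_0 \times \cV_0 \to \D_f \times \D_f$ is the morphism $\cV_1 = \cV_0 \times_{\D_f} \cV_0 \to \cV_0 \times \cV_0$ of qcqs spaces, which is quasi-compact, forcing quasi-separatedness.

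I expect the main obstacle to lie in the base case of the \emph{if} direction: one must read off from the explicit deformation-space construction that $\D_g \to Y_i \times \bP^1$ is quasi-compact and quasi-separated, i.e. that $\D_g$ is built, \'etale-locally and in finitely many pieces, as an open of a blow-up with finitely presented centre. The two pieces of bookkeeping that surround it --- choosing a presentation with qcqs terms, and transporting the qcqs property along the smooth groupoid --- are routine once this is in hand.
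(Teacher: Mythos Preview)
Your proposal is correct and matches the paper's proof in both directions: for the only-if direction you and the paper read off $Y$ and $C_f$ as fibres over $\bP^1$ and recover $X$ via the zero section of $C_f$; for the if direction you both choose a groupoid presentation with qcqs terms, establish that $\D_{f_0}$ and $\D_{f_1}$ are qcqs, and descend. What you package as the ``base case'' (unramified $g$ between qcqs spaces, handled via the \'etale-local blow-up description plus patching) is precisely what the paper handles by recursing once more --- ``represent it as an \'etale groupoid of immersions and run the argument above again'' --- so the two write-ups differ only in where the recursion is made explicit.
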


\begin{proof}
We will apply results in \cite[Tag 075S]{Stacks} implicitly numerous times in this proof.

For the if direction, as $X$ is quasi-compact, $Y$ is qcqs, and we can represent $f$ as a morphism between groupoids $f_\bullet\colon X_\bullet \to Y_\bullet$
such that $X_0 \to Y_0$ is a closed immersion between affine schemes.
Then $\D_{f_0}$ is qcqs, and $\D_f$ is quasi-compact.
 
 As $X$ and $Y$ are qcqs, $X_0$ and $Y_0$ are affine, we see that $X_1$ and  $Y_1$ 
 are qcqs algebraic spaces. 

Assume $\D_{f_1}$ is qcqs for the moment,  as $\D_{f_0}$ is  qcqs, $\D_{f_1} \to \D_{f_0} \times \D_{f_0}$ is qcqs,
it follows that $\D_f$ is quasi-separated. 
To show that $\D_{f_1}$ is qcqs, represent it as a \'etale groupoid of immersions, and run the argument above again.
 
For the only if direction, if $\D_f$ is qcqs, then $\D_{f} \to \bP^1$ is qcqs, so its fibers $C_f$ and $Y$ are qcqs.
As the zero section of a cone stack is qcqs,  
$X \to \Spec k$ as the composition of $X \to C_f$ and $C_f \to \Spec k$ is qcqs.

 \end{proof}

\begin{prop}[{cf. \cite[Theorem 2.31]{vpb}}] \label{defcon} 
Given a cartesian diagram of algebraic stacks
\[
\xymatrix{
X' \ar[r]^g\ar[d]  & Y' \ar[d]^-\nu \\
X \ar[r]^f              & Y
} 
\] such that $f$ is DM, there is an induced map $\D_g \to \D_f$
over $Y' \to Y$.
$\D_g \to \D_f \times_{Y}Y'$ is an isomorphism when $\nu$ is flat, and
a closed immersion in general.
\end{prop}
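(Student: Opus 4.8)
The plan is to build the comparison map from the functoriality of Fulton's deformation-to-the-normal-cone construction in the base case of a closed immersion of schemes, and then to propagate it to the DM situation by the very descent used to define $\D$. First I would reduce to that base case. Since $\D$ is defined by descent, choose a presentation of $f$ as a map of groupoids $f_\bullet\colon X_\bullet \to Y_\bullet$ with $f_0,f_1$ unramified, arranged (as in the Preliminaries) so that $f_0$ is a disjoint union of closed immersions of affine schemes; base-changing the whole groupoid along $\nu$ produces a compatible presentation $g_\bullet$ of $g$, with $X'_i = X_i \times_{Y_i} Y'_i$ and $g_i$ again unramified. Because $\D$ of a groupoid is the stack associated to $\D_{f_1}\rightrightarrows\D_{f_0}$ (and likewise \'etale-locally for the unramified steps, since unramified maps are only \'etale-locally immersions), it suffices to produce the map and verify its properties for $f_0,f_1$, i.e.\ for (disjoint unions of) closed immersions of affine schemes, and then to check that everything descends.

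For the base case, write $\D_f$ as the open complement $\mathrm{Bl}_{X\times\{\infty\}}(Y\times\bP^1)\setminus \widetilde{Y\times\{\infty\}}$ of the strict transform of $Y\times\{\infty\}$, and similarly for $g$ using $X'\times\{\infty\}\subset Y'\times\bP^1$. Since $X'\times\{\infty\}$ is the preimage of $X\times\{\infty\}$ under $Y'\times\bP^1 \to Y\times\bP^1$, the universal property of blowing up yields a canonical map $\mathrm{Bl}_{X'\times\{\infty\}}(Y'\times\bP^1)\to \mathrm{Bl}_{X\times\{\infty\}}(Y\times\bP^1)\times_{Y\times\bP^1}(Y'\times\bP^1)$ realizing the source as the strict transform of $Y'\times\bP^1$; this is a closed immersion in general and an isomorphism when $\nu$ (hence $Y'\times\bP^1\to Y\times\bP^1$) is flat, by the standard behavior of blow-ups under base change. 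One then checks that this map carries $\widetilde{Y'\times\{\infty\}}$ to $\widetilde{Y\times\{\infty\}}\times_Y Y'$, so that passing to complements gives $\D_g \to \D_f\times_Y Y'$ with the same two properties. Over $\bA^1$ this recovers $Y'\times\bA^1 = (Y\times\bA^1)\times_Y Y'$, and over $\{\infty\}$ it recovers the natural closed immersion $C_g \hookrightarrow C_f\times_X X' = C_f\times_Y Y'$ of normal cones, an isomorphism in the flat case.

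Finally I would descend. The comparison maps for $f_0$ and $f_1$ are compatible with the groupoid structure maps (these being themselves base changes), so they assemble into a map of groupoids and hence, on associated stacks, into $\D_g \to \D_f\times_Y Y'$ lying over $Y'\to Y$; composing with the projection also produces the map $\D_g \to \D_f$ of the first assertion. If $\nu$ is flat, flatness is stable under the base changes defining $g_\bullet$, so each level is an isomorphism, and since forming the quotient stack commutes with the fiber product along $\nu$, the map of stacks is an isomorphism. For general $\nu$, being a closed immersion is fppf-local on the target, so it suffices to verify the claim after pulling back along the smooth atlas $\D_{f_0}\to \D_f$ (base-changed along $\nu$), where it becomes the base-case strict-transform closed immersion.

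The main obstacle is the bookkeeping in the descent step: one must choose the groupoid presentation of $f$ so that its base change genuinely presents both $\D_g$ and $\D_f\times_Y Y'$ (including the extra \'etale-groupoid layer needed when $f_0,f_1$ are only unramified rather than immersions), and one must know that ``isomorphism'' and ``closed immersion'' are detected on the smooth covers. Both are routine once the base-case blow-up statement and its compatibility with the complement are in hand, but checking that the strict-transform map is \emph{exactly} the complement-preserving map $\D_g \to \D_f\times_Y Y'$, uniformly across the presentation, is where the care is required.
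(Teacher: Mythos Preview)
The paper does not actually supply a proof of this proposition: it cites \cite[Theorem~2.31]{vpb} and then only remarks that the statement ``is not hard to show assuming the construction of $\D_f$ is independent of groupoid presentations of $f$.'' Your proposal therefore fills in strictly more detail than the paper, and it does so along exactly the lines the paper's remark indicates: reduce via the descent definition of $\D$ to a cartesian square of closed immersions of affine schemes, verify the claim there, and descend. That strategy is correct.

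Two comments. First, for the affine base case the paper later uses (in the proof of functoriality) the Rees-algebra description: over $\bP^1\setminus\{0\}$, $\D_f=\Spec\big(A[T]\oplus\bigoplus_{n>0}I^n/T^n\big)$. With this description the comparison map is simply induced by the surjection of graded $A'$-algebras
\[
\Big(A[T]\oplus\bigoplus_{n>0}I^n/T^n\Big)\otimes_A A' \twoheadrightarrow A'[T]\oplus\bigoplus_{n>0}(IA')^n/T^n,
\]
which is an isomorphism when $A\to A'$ is flat; over $\bP^1\setminus\{\infty\}$ both sides are $Y'\times\bA^1$. This avoids the strict-transform bookkeeping that you flag as the delicate point in the blow-up approach, and matches the paper's internal conventions.

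Second, in your descent step the sentence ``base-changing the whole groupoid along $\nu$ produces a compatible presentation $g_\bullet$ of $g$'' is not literally correct when $\nu$ is not representable: $Y_0\times_Y Y'$ need not be a scheme (or even an algebraic space), so it is not yet a presentation of $Y'$. One must first choose a smooth atlas $Y'_0\to Y_0\times_Y Y'$ by a scheme and then observe that $X'_0:=X'\times_{Y'}Y'_0=X_0\times_{Y_0}Y'_0$, so the square at level~$0$ is again cartesian and one is back in the base case. This is exactly the ``independence of presentation'' the paper's remark invokes, and it is the only nontrivial point; once granted, your argument goes through.
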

\begin{remark}
The proposition is not hard to show assuming the construction of $\D_f$ is independent of
groupoid presentations of $f$.

Presumably, the deformation space construction gives rise to a functor $\D$  from the $(2,1)$-category of morphisms between algebraic stacks to the $(3,1)$-category of algebraic 2-stacks, and $\D_f$ is a 1-stack when $f$ is DM.  Assuming the expected properties of $\D$, one can introduce virtual pullbacks for Artin stacks involving 2-stacks. 
If we truncate 2-stacks to 1-stacks (i.e., taking $\pi_{\le 1}$.), then 
we have the version of virtual pullbacks in \cite{Po}, which is similar to working with obstruction sheaves instead of vector bundle stacks. 
We hope to address these matters in \cite{Q}.
\end{remark}

\subsection{Grothendieck groups of coherent sheaves}
\subsubsection{}
We will use $K_0(-)$ to denote the Grothendieck group of an abelian category or a triangulated category.
Recall the Grothendieck group of an abelian category $\cA$ is 
the abelian group generated by symbols $[a]$ for each object $a$ in $\cA$ modulo relations generated by 
\[
[a]=[a']+[a'']
\] for each exact sequence 
\[0\to a'\to a\to a''\to 0.\]
The Grothendieck group of a triangulated cateogory $D$ is defined similarly, it is 
the abelian group  generatored by $[x]$ for objects $x$ in $D$ and relations
\[
[x]=[x']+[x'']
\] for each distinguished triangle $x'\to x\to x''$.

Let $D$ be a triangulated category with a $t$-structure. Denote $\cA_D$ its heart and 
$D^b$ the full subcategory of $D$ consisting of bounded objects, i.e., $x \in D$ such that $H^n(x)=0$ for $|n|>>0$, here $H^n=\tau_{\le n}\tau_{>n}\colon D \to \cA_D$.
Note that there is an isomorphism \[K_0(D^b) \simeq K_0(\cA_D)\]
given by 
\[
[x] \mapsto \sum (-1)^i[H^i(x)].\] 

If we have a triangulated functor $\sF\colon D \to E$ such that $\sF(D^b)\subset \sF(E^b)$, then we have an induced functor 
$K_0(D^b) \to K_0(E^b)$, or equivalently  a functor $K_0(\cA_D) \to K_0(\cA_E)$.


\subsubsection{}

For an algebraic stack $X$ locally of finite type over $k$,
denote by $K_0(X)=K_0(\Coh(X))$, where $\Coh(X)$ is the  abelian category of coherent sheaves on $X$.

\begin{rmk}
(Quasi) Coherent sheaves can be defined using the lisse-\'etale site of $X$ as in \cite[Definition 6.1]{O1}. 
See, e.g., \cite[Section 1]{HR}, for a summary of 
quasi-coherent sheaves on algebraic stacks. 

As maps between $K_0$
groups are induced by derived functors, it is more flexible to 
to think of $K_0(X)$ as $K_0(D^b_\coh(X))$, here $D^b_\coh(X)$ is the full subcategory
of the derived category of $\cO_X$-modules with coherent cohomology. 
\end{rmk}

For  a flat morphism $f\colon X \to Y$, we have the pullback
$f^*\colon \Coh(Y) \to \Coh(X)$, since its exact by the flatness assumption, we have an induced map
$f^!\colon K_0(Y) \to K_0(X)$.


For a proper map $f\colon X \to Y$, we have $R^if_* F\in \Coh(Y)$ for any coherent sheaf $F$ on $X$ and each $i \ge 0$ by \cite[Theorem 1.2]{O2}, \cite[Theorem 1]{Fa}.
Therefore the map \[
Rf_*\colon D^+_\qcoh(X) \to D^+_\qcoh(Y)
\] induces
\[
Rf_*\colon D^b_\coh(X) \to D^+_\coh(Y).
\]
If $Rf_*$ satisfies
\begin{equation} \tag{$\dagger$} \label{proper}
Rf_* (D^b_\coh(X)) \subset  D^b_\coh(Y), 
\end{equation}

then we can define $f_*\colon K_0(X) \to K_0(Y)$ by 
\[
[F] \mapsto \sum_n (-1)^n [R^nf_*F].
\] 
 
Because of the condition on the pushfoward map above, we decided to 
consider pushforwards only along proper DM maps for simplicity.

\begin{rmk}
It is easy to see \eqref{proper} is the same as the condition 
\[
Rf_*(\Coh(X)) \subset  D^b_\coh(Y).
\]

Two related notions are 'of finite cohomological dimension' (\cite[Definition 2.3]{HR}),
which requires $Rf_*(\operatorname{Qcoh}(X)) \subset D^{\le n}_\qcoh(Y)$ for some $n$, and concentrated(\cite[Definitino 2.4]{HR}), which is
similar to being universally of finite cohomological dimension.
Obviously, a proper map of finite cohomological dimension satisfies \eqref{proper}. 

If $f$ is proper DM, then it is concentrated, in particular, satisfies \eqref{proper},
this follows from \cite[Theorem 2.1]{HR0}, or 
one can employ coarse moduli spaces.
\end{rmk}

The functor $K_0(-)$ is covariant with respect to proper DM morphisms, contravariant with respect to flat morphisms. Proper pushfowards commute with flat pullbacks by, e.g., \cite[Lemma 1.2 (4)]{HR}.

\begin{rmk}
Covariance and contravariance are interpreted with respect to the homotopy category of stacks, as it is easy to see
that the flat pullback $f^!$ or the proper pushforward $f_*$ only depends on the homotopy class of $f$.

\end{rmk}

Let $X$ be an algebraic stack, quasi compact and quasi-separated 
\footnote{The map $X \to X \times X$ is quasi-compact and quasi-separated.}, 
locally of finite type over $k$, and $Z$ an closed substack of $X$ with complement $U$, then
we have the localization sequence
\[
K_0(Z) \to K_0(X) \to K_0(U) \to 0.
\]
This can be proved as if $X$ is a Noetherian scheme using \cite[Proposition 15.4]{LMB}.

For a morphism $i \colon X \to Y$ that is smooth locally a regular closed immersion between schemes, we have a Gysin pullback $i^!$.
Given a cartesian diagram
\begin{equation} \label{sqr}
\xymatrix{
X' \ar[r]\ar[d] 
   & Y' \ar[d] \\
X \ar[r]
   & Y
   }   
\end{equation}
$i^!\colon K_0(Y') \to K_0(X')$ is given by 
\[
i^![G] = \sum_n (-1)^n\tor_n^Y(G, \cO_X),
\] where $\tor_n^Y(G, \cO_X)$ is the tor sheaf.
Note that $i^!\colon K_0(Y) \to K_0(X)$ is given by 
$Li^*\colon D^b_\coh(Y) \to D^b_\coh(X)$.

In particular, for the zero section of a vector bundle stack,
 we have a Gysin pullback.
\begin{rmk}
For a summary of Tor sheaves, see, e.g., \cite[3.1, 3.2]{AS}.
To extend results proved for tor sheaves on schemes to algebraic stacks, we note that the formation of $Tor$ in \eqref{sqr} behaves well under flat maps in $X$, $Y$, and $Y'$.
\end{rmk}


Gysin pullbacks commute with proper pushforwards and flat pullbacks (\cite[Lemma 3.1, Lemma 3.3]{AS}).

When $i\colon X \to Y$ is represented by a regular closed immersion, we have
\[
i^!i_*[F]= [F]\otimes \Lambda_{-1}(N_i^\vee)\colon K_0(X) \to K_0(Y),
\] where $N_i$ is the normal bundle of $i$.

For $F$ a coherent sheaf on $X$ and $G$ a coherent sheaf on $Y$, denote
by $F\boxtimes G$ the sheaf $\pr_X^*F \otimes \pr_Y^* G$ on $X \times Y$.
As the projection maps $\pr_X, \pr_Y$ are flat, we have an induced map
\[
\boxtimes\colon K_0(X) \times K_0(Y) \to K_0(X \times Y).
\]
\subsection{Bivariant classes}
The notion of an operational bivariant class for a representable map between quotient stacks is introduced in \cite{AS}. 
It is straightforward to adapt the definition there to algebraic stacks in general.

Let $f\colon X \to Y$ be a map between algebraic stacks, we have a group 
$\opk(X\xrightarrow{f} Y)$ of bivariant classes.
A bivariant class $c$ in $\opk(X\xrightarrow{f} Y)$ is given by a collection of maps
\[
c_\nu\colon K_0(Y') \to K_0(X \times_Y Y')
\] indexed by $\nu\colon Y' \to Y$.
These maps should commute with proper DM pushforwards, flat pullbacks, and Gysin pullbacks.

\begin{rmk}
Proper DM pushforwards are not too restrictive, considering pushforwards in Chow groups (with rational coefficients) are defined only
for proper DM morphisms.
\end{rmk}

\subsection{Perfect obstruction theories}
Given a morphism $f\colon X \to Y$ between algebraic stacks,  denote by $\bL_f \in D^{\le 1}_{\qcoh}(X)$ the cotangent complex of $f$. Here $D_{\qcoh}(X)$ is the full subcategory of the derived category of $\cO_X$-modules (on the lisse-\'etale site of $X$)
with quasi-coherent cohomology sheaves. Cotangent complexes for algebraic stacks
behave the same as those for schemes (\cite[2.4]{AOV}).

An obstruction theory for $f$ is given by a map $\phi\colon E^\bullet \to \bL_f$ in $D_\qcoh(X)$ such that
$h^1(\phi), h^0(\phi)$ are isomorphisms, $h^{-1}(\phi)$ is surjective. 
If $E^\bullet$ is a perfect complex of tor amplitude [-1,1], then it is called a perfect obstruction theory (POT) (\cite[Definition 3.1]{Po}).

When $f$ is DM, $\bL_f\in D^{\le 0}_{\qcoh}(X)$, and a perfect obstruction theory
$\phi\colon E^\bullet \to \bL_f$ induced a
closed embedding $C_f \hookrightarrow \fE_f$ between cone stacks, where 
$\fE_f=h^1/h^0({E^{\bullet}}^{\vee})$ (\cite{BF, Po}), and any such imbedding corresponds to some POT. So
a POT can be viewed either as some map in the derived category or 
an embedding of the intrinsic normal cone
into some vector bundle stack. We will switch between these two viewpoints freely.

For  a cartesian diagram
\[
\xymatrix{
X' \ar[r]^g\ar[d]^\mu & Y'\ar[d]\\
X \ar[r]^f & Y,
}
\]
a POT $E^{\bullet} \to \bL_f$ induces a POT $\mu^* E^{\bullet} \to \bL_g$ for $g$, it is given 
by the composition \[\mu^*E^{\bullet} \to \mu^* \bL_f \to \bL_g .\] 
The induced embedding of $C_g$ is given by the compostion
\[
C_g \hookrightarrow C_f\times_X X' \hookrightarrow \fE_f\times_X X'.
\]

\section{Virtual Pullbacks}
\label{s2}

In this section, all stacks are of finite type and quasi-separated over $k$.

\subsection{Deformation to the normal cone map}

Let  $f\colon X \to Y $ be a DM morphism between algebraic stacks.
As $\D_f$ is qcqs by Lemma \ref{qcqs}, we have a localization sequence to define the deformation to the normal cone map 
\[\sigma_f = i^*\circ {j^*}^{-1}\circ \pr^*\colon K_0(Y) \to K_0(C_f)\]
using $K$-theoretic version of \eqref{loc}. (See \cite[page 352]{Fu}.)

\begin{prop} \label{biv}
Consider a cartesian diagram between algebraic stacks
\[
\xymatrix{
X' \ar[r]^g\ar[d]  & Y' \ar[d]^\nu \\
X \ar[r]^f              & Y,
} 
\] where $f$ is DM. Let $\xi\colon C_{g} \to C_f$ be the induced map between cone stacks.
 
\begin{enumerate}
 \item if $\nu$ is proper DM, then
 \[ \xi_*\circ \sigma_g =\sigma_f\circ \nu_* \colon K_0(Y') \to K_0(C_f).\]
 
 \item if $\nu$ is flat, then $\xi$ is flat, and 
  \[ 
   \sigma_g\circ \nu^! = \xi^!\circ\sigma_f\colon K_0(Y) \to K_0(C_g) .\]
\end{enumerate}
\end{prop}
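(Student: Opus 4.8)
The plan is to reduce both statements to compatibility properties of the deformation space $\D$ established in Proposition~\ref{defcon}, together with the commutation rules for proper pushforward, flat pullback, and Gysin pullback with respect to the $K_0$-functor. Recall that $\sigma_f = i^*\circ {j^*}^{-1}\circ \pr^*$ is assembled from three pieces living over the diagram \eqref{ds}: the flat projection $\pr\colon Y\times\bA^1 \to Y$, the flat-pullback inverse coming from the localization sequence associated to the open immersion $j$, and the Gysin pullback $i^*$ along the closed fibre over $\{\infty\}$. The strategy is to show that each of these three pieces intertwines appropriately with $\nu_*$ (in part (1)) or with $\nu^!$ (in part (2)), and then compose.

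For part (2), I would begin with the case $\nu$ flat. By Proposition~\ref{defcon}, flatness of $\nu$ gives an \emph{isomorphism} $\D_g \xrightarrow{\sim} \D_f\times_Y Y'$, so the entire deformation diagram for $g$ is obtained from that of $f$ by flat base change along $\nu$; in particular $\xi\colon C_g \to C_f$ is flat, being the base change of $C_f$ along the flat map $X'\to X$. I would then chase the $K$-theoretic diagram \eqref{loc} for $g$ against that for $f$: flat pullback $\pr^*$ commutes with $\nu^!$ by functoriality of flat pullbacks and the fact that $\pr_{Y'} = \nu\times\id$; the inverse ${j^*}^{-1}$ commutes because flat base change preserves the localization sequence and the open/closed decomposition; and the Gysin pullback $i^*$ commutes with flat pullback by the cited compatibility (\cite[Lemma 3.3]{AS}). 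Composing these three commutations yields $\sigma_g\circ\nu^! = \xi^!\circ\sigma_f$. The essential input is that flatness makes $\D_g$ a genuine fibre product so that \emph{every} arrow in the deformation diagram is a flat base change, turning the identity into a formal consequence of flat-pullback functoriality.

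For part (1), with $\nu$ proper DM, the map $\D_g \to \D_f\times_Y Y'$ is only a closed immersion, so $\D_g$ is generally not the full fibre product and one cannot simply invoke base change. Instead I would use that the induced map $\D_\nu\colon \D_g \to \D_f$ over $\D_f$ restricts to $\nu\times\id$ on the $\bA^1$-part and to $\xi$ on the special fibre $C_f$; properness of $\nu$ propagates to properness of $\D_g\to\D_f$ (away from subtleties over $\{\infty\}$, where one checks properness of $\xi$ directly). The compatibility then follows from: proper pushforward commutes with the flat pullbacks $\pr^*$ and ${j^*}^{-1}$ (\cite[Lemma 1.2(4)]{HR}), and proper pushforward commutes with the Gysin pullback $i^*$ (\cite[Lemma 3.1]{AS}). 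Chaining these gives $\xi_*\circ\sigma_g = \sigma_f\circ\nu_*$.

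The main obstacle will be part (1): verifying that the relevant squares are genuinely cartesian and that properness descends correctly through the deformation construction, precisely because $\D_g\hookrightarrow \D_f\times_Y Y'$ is not an isomorphism. I would handle this by working \'etale-locally on a groupoid presentation $f_\bullet\colon X_\bullet\to Y_\bullet$ in which $f_0$ is a disjoint union of closed immersions of affine schemes, reducing to the classical deformation-to-the-normal-cone diagram for closed immersions of schemes (\cite[Chapter~5]{Fu}), where commutation of $\sigma$ with proper pushforward is standard; the descent from this presentation to stacks is guaranteed since $\sigma_f$, $\nu_*$, and $\xi_*$ are all defined compatibly with smooth pullback. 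The flat localization sequence step in part~(2) and the proper-pushforward-through-localization step in part~(1) are the two places where I would check most carefully that the open-closed decomposition of $\D$ is respected.
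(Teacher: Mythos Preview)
Your overall strategy---compare the two deformation-space diagrams via the induced map $\D_g \to \D_f$ over $\bP^1$ and invoke the standard commutation rules among proper pushforward, flat pullback, and Gysin pullback---is exactly the paper's approach, and part~(2) is handled just as you describe.

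Where you diverge is in part~(1), and the divergence is an unnecessary detour rather than an error. You flag as ``the main obstacle'' the verification that $\D_g \to \D_f$ is proper DM and that the relevant squares are cartesian, and you propose an \'etale-local reduction to closed immersions of affine schemes. The paper bypasses this entirely with a one-line observation: by Proposition~\ref{defcon} the map $\D_g \to \D_f$ factors as the closed immersion $\D_g \hookrightarrow \D_f \times_Y Y'$ followed by the projection $\D_f \times_Y Y' \to \D_f$, which is proper DM as a base change of $\nu$; hence $\D_g \to \D_f$ is proper DM on the nose. No local presentation is needed. Likewise, the cartesian-ness of the squares involving $C_g$, $C_f$ and $Y'\times\bA^1$, $Y\times\bA^1$ is automatic: since $\D_g \to \D_f$ is a morphism over $\bP^1$, the fibres over $\{\infty\}$ and over $\bA^1$ pull back to the fibres, so $C_g = \D_g \times_{\D_f} C_f$ and $Y'\times\bA^1 = \D_g \times_{\D_f} (Y\times\bA^1)$ hold by construction. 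With these two observations in hand, the proof is just the diagram chase you already outlined---map the localization sequence for $\D_g$ to that for $\D_f$ by proper pushforward and use the compatibilities you cite. Your proposed groupoid-presentation argument would work, but it is not needed.
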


\begin{proof}

We treat the proper pushforward case, the flat pullback case is similar and easier.

As $Y' \to Y$ is proper DM, so is $\D_g \to \D_f$, since it is the composition of a closed immersion
$\D_g \to \D_f\times_{Y'}Y$ by Proposition \ref{defcon}, and proper DM map $\D_f\times_{Y'}Y \to \D_f$.
The map $\D_g \to \D_f$ induces  a commutative diagram
\[
\xymatrix{
K_0(C_g) \ar[r]\ar[d]^{\xi_*} & K_0(\D_g) \ar[r]\ar[d] & K_0(Y'\times\bA^1) \ar[r]\ar[d]&0\\
K_0 (C_f) \ar[r]         &K_0(\D_f)\ar[r]           & K_0(Y\times \bA^1)\ar[r]&0,
}
\] where horizontal arrows are localization sequences, and vertical arrows are proper pushforwards.
Using the commutativity between flat pullbacks, proper pushforwards, and Gysin pullbacks, a diagram chase gives the result of (1). 
\end{proof}


\subsection{Virtual pullbacks}

\begin{defn}[{cf. \cite[Definition 3.7]{vpb}} ]
Assume $f$ is DM,
a perfect obstruction theory (POT) $\phi\colon E^{\bullet} \to \bL_f^{\bullet} $\,  for $f\colon X \to Y$ gives rise to  a closed immersion
$\iota: C_f \hookrightarrow \fE_f$, where $\fE_f = h^1/h^0({E^{\bullet}}^{\vee}).$
Using this immersion, we can define a virtual pullback 
\[f^{!}\colon K_0(Y) \to K_0(X)\]
 as the composition:
\[
\xymatrix{ K_0(Y) \ar[r]^-{\sigma_f} &K_0(C_f) \ar[r]^-{\iota_*} &K_0(\fE_f)\ar[r]^{s^!}& K_0(X),}
\]
 where
$s$ is the zero section of $\fE_f$, $s^!$ its Gysin pullback.

The virtual structure sheaf $\cO_\phi$ is defined as  $f^!(\cO_Y)$.

\end{defn}

\begin{rmk}
When $f$ is smooth and DM, the identity map of $\bL_f$ gives rise to a POT, and the virtual pullback is the same as the flat pullback.
\end{rmk}

\subsection{Bivariance}

Consider a cartesian diagram
\[
\xymatrix{
X' \ar[r]^{g}\ar[d]^\mu & Y'\ar[d]\\
X \ar[r]^f & Y,
}
\]
a POT $E^{\bullet} \to \bL_f$ induces a POT $\mu^* E^{\bullet} \to \bL_{g}$ for $g$, 
so we have a map 
\[g^!: K(Y') \to K(X').\]

We will show that the collection of maps 
$g^!: K(Y') \to K(X')$ for each $Y' \to Y$ defines a bivariant class, denoted by $f^!\in  \opk(X\xrightarrow{f} Y)$.

\begin{prop}[cf. {\cite[Example 17.6.4]{Fu}}]
$f^!$ commutes with proper DM pushforwards  and flat pullbacks.
\end{prop}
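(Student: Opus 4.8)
The plan is to factor each virtual pullback as $g^! = s^!\circ \iota_*\circ \sigma_g$ and to verify the two compatibilities one factor at a time, following the model of \cite[Example 17.6.4]{Fu}. Fix the base change $\nu\colon Y'\to Y$ giving $g'\colon X'\to Y'$ with its induced POT $\mu^*E^{\bullet}\to \bL_{g'}$, and introduce a further morphism $Y''\to Y'$ over $Y$, forming the cartesian cube with base-changed maps $g''\colon X''\to Y''$ and $p'\colon X''\to X'$. The first thing I would record is that the geometric data base change correctly: since $\fE_f=h^1/h^0({E^{\bullet}}^\vee)$ is formed by pullback, one has $\fE_{g''}=\fE_{g'}\times_{X'}X''$ with compatible zero sections, and the induced map $\xi\colon C_{g''}\to C_{g'}$ from Proposition \ref{biv} sits in a commutative square with the two closed embeddings $\iota_{g'},\iota_{g''}$. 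Each $\iota$ is a representable closed immersion, hence proper DM, so the pushforwards $\iota_*$ are defined.

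For the proper pushforward case I would assume $Y''\to Y'$ (hence $p'$) proper DM and chain three identities. First, $\sigma_{g'}\circ p_* = \xi_*\circ \sigma_{g''}$ is precisely Proposition \ref{biv}(1) applied to the square with $g'$ in the role of $f$ and the proper DM morphism $Y''\to Y'$ in the role of $\nu$. Second, $\iota_{g'*}\circ \xi_* = q_*\circ \iota_{g''*}$, where $q\colon \fE_{g''}\to \fE_{g'}$ is the projection: this is functoriality of proper pushforward applied to the two factorizations $C_{g''}\hookrightarrow C_{g'}\hookrightarrow \fE_{g'}$ and $C_{g''}\hookrightarrow \fE_{g''}\to \fE_{g'}$ of the same map, which agree because the square of embeddings commutes. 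Third, $s^!\circ q_* = p'_*\circ s^!$ by the commutativity of Gysin pullbacks with proper pushforwards (\cite[Lemma 3.1]{AS}), using the fiber square that exhibits $\fE_{g''}$ as the base change of $\fE_{g'}$ along $X''\to X'$. Composing the three gives $g'^!\circ p_* = p'_*\circ g''^!$.

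The flat pullback case is formally dual and slightly easier. For $Y''\to Y'$ flat, Proposition \ref{defcon} makes the square $C_{g''}\to C_{g'}$, $\fE_{g''}\to \fE_{g'}$ cartesian and $\xi$ flat, so I would replace Proposition \ref{biv}(1) by \ref{biv}(2), replace the functoriality-of-pushforward step by commutativity of flat pullback with the closed-immersion pushforward $\iota_*$ over that cartesian square, and conclude with commutativity of Gysin pullbacks with flat pullbacks (\cite[Lemma 3.3]{AS}); chaining these yields $g''^!\circ q^! = q'^!\circ g'^!$. I expect the main obstacle to be not any single identity but the bookkeeping of matching the geometric inputs across the cube: confirming that $\fE$ base changes as claimed, that the closed embeddings $\iota$ are genuinely compatible, and that the relevant squares are cartesian so that the commutativity lemmas apply. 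This is a direct consequence of the base-change functoriality of perfect obstruction theories recalled in Section 1, but it is where the substance of the argument sits; once it is in place, each factor's compatibility is supplied verbatim by Proposition \ref{biv} or by the known behavior of Gysin pullbacks.
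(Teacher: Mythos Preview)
Your proposal is correct and is essentially the same approach as the paper's: the paper's proof is the single line ``This follows from Proposition \ref{biv},'' and what you have written is exactly the unpacking of that sentence---factor $g^!=s^!\circ\iota_*\circ\sigma_g$, invoke Proposition \ref{biv} for the $\sigma$ step, and handle the remaining $\iota_*$ and $s^!$ steps by the standard compatibilities of proper pushforward, flat pullback, and Gysin pullback already recorded in Section~1. Your bookkeeping of the cube and of the base change of $\fE$ and $\iota$ is accurate and is precisely what the paper leaves implicit.
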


\begin{proof}
This follows from Proposition \ref{biv}.

\end{proof}


\begin{prop}[Commutativity]
Given a cartesian diagram
\[\xymatrix{
X' \ar[r]\ar[d] & Y'\ar[d]^{\nu}\\
X \ar[r]^{f} & Y
}\] such that we have defined virtual pullback $f^!,\nu^!$, 
then $f^! \circ \nu^! = \nu^! \circ f^!$. 
\end{prop}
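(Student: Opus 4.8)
The plan is to follow the strategy Fulton uses for commutativity of refined Gysin homomorphisms (\cite[Theorem 6.4]{Fu}) and its virtual extension in \cite{vpb}, reducing the statement to a commutativity of Gysin pullbacks of zero sections of vector bundle stacks by a double deformation to the normal cone. Throughout I will use that, by the preceding proposition, $f^!$ and $\nu^!$ each commute with proper DM pushforwards and flat pullbacks, that Gysin pullbacks commute with proper pushforwards and flat pullbacks, and that the induced POTs behave well under base change along the given square, so that over any $T\to Y$ the relevant vector bundle stacks are the pullbacks $\fE_f\times_X(-)$ and $\fE_\nu\times_{Y'}(-)$.

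First I would unwind both composites. Writing $f^!=s_f^!\circ\iota_{f,*}\circ\sigma_f$ and $\nu^!=s_\nu^!\circ\iota_{\nu,*}\circ\sigma_\nu$, and using base change of the POTs along the square, both $f^!\circ\nu^!$ and $\nu^!\circ f^!$ become an alternating composite of two deformation-to-normal-cone maps, two pushforwards along closed immersions $\iota_{f},\iota_{\nu}$ of cones into vector bundle stacks, and two zero-section Gysin pullbacks $s_f^!,s_\nu^!$. Since $\iota_{f,*},\iota_{\nu,*}$ are proper pushforwards and the $s^!$ are Gysin pullbacks, Proposition \ref{biv} together with the compatibility of Gysin pullbacks with proper pushforwards and flat pullbacks lets me move each deformation map past the operations coming from the other morphism; the one genuinely new point is to commute the two deformations $\sigma_f$ and $\sigma_\nu$ with each other.

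To handle the two deformations simultaneously I would build a double deformation space, applying the deformation-space functor $\D$ in both $\bP^1$-directions (deforming $\nu$ on $\D_f$, or symmetrically $f$ on $\D_\nu$). This produces a stack over $\bP^1\times\bP^1$, qcqs by Lemma \ref{qcqs}, whose generic--generic fiber recovers the plain flat pullback, whose two mixed fibers recover $f^!\nu^!$ and $\nu^! f^!$, and whose special--special fiber carries a single cone embedded simultaneously in $\fE_f\times_X X'$ and in $\fE_\nu\times_{Y'} X'$, both now vector bundle stacks over $X'$. By Proposition \ref{defcon} the formation of $\D$ is compatible with the base changes involved, and iterating the localization-sequence definition of $\sigma$ in the two directions shows that both $f^!\nu^!$ and $\nu^! f^!$ arise from this common special--special fiber by deforming in both directions in either order.

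It then remains to compare the two orders of zero-section Gysin pullback on this doubly-deformed fiber, that is, to show that $s_f^!$ and $s_\nu^!$ commute on $K_0$ of the cone inside the fibered product of the two vector bundle stacks over the common base $X'$. This is the heart of the matter and the step I expect to be the main obstacle: it should be reduced to the commutativity of two Gysin pullbacks for vector bundle stacks over a common base, which holds because each is computed by a Koszul-type resolution and the associated Tor's may be taken in either order (cf.\ the Tor computations in \cite[3.1, 3.2]{AS}). Care is needed to keep all intermediate stacks qcqs so that the localization sequences defining the $\sigma$'s remain available, and to verify that the base-changed POTs on $g\colon X'\to Y'$ and on the pullback of $\nu$ to $X$ are precisely the ones induced by the diagram, so that the two mixed fibers genuinely compute $f^!\nu^!$ and $\nu^! f^!$.
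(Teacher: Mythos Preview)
Your strategy is correct and matches the paper's: a double deformation space over $\bP^1\times\bP^1$ followed by a reduction to commutativity of Gysin pullbacks. The organization differs slightly. You propose an iterated construction (applying $\D$ to $\nu$ on $\D_f$) and reduce at the end to the commutativity of the zero-section Gysin pullbacks $s_f^!,s_\nu^!$ on the product of vector bundle stacks. The paper instead takes the simpler fiber product $\D_f\times_Y \D_\nu\to\bP^1\times\bP^1$ and observes directly that both $\nu^!f^!$ and $f^!\nu^!$ factor through the same map $K_0(C_f\times_Y C_\nu)\to K_0(\fE_f\times_Y\fE_\nu)\to K_0(X')$; the only difference between the two orders is whether one restricts first to the divisor $D=\{\infty\}\times\bP^1$ or to $E=\bP^1\times\{\infty\}$ on $\D_f\times_Y\D_\nu$. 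Thus the final reduction is to $i_D^!\circ i_E^!=i_E^!\circ i_D^!$ for two principal Cartier divisors, rather than to commutativity of zero-section pullbacks. Both reductions are to standard facts, but the paper's packaging avoids the intermediate shuffling of $\sigma$'s past $\iota_*$'s and $s^!$'s that you describe, and sidesteps any need to build or justify an iterated deformation space.
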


\begin{proof}

It is enough to show 
\[
f^! \circ \nu^! = \nu^! \circ f^!\colon K_0(Y) \to K_0(X').
\]
Consider the cartesian diagram
\begin{equation} \label{eq1}
 \xymatrix{
C_f \times_Y C_\nu \ar[r]\ar[d] & g^*C_\nu \ar[r]^{h}\ar[d] & C_\nu\ar[d]\\
\mu^*C_f \ar[d]^{\xi}\ar[r]&X' \ar[r]^{g}\ar[d]^{\mu} & Y'\ar[d]^{\nu}\\
C_f\ar[r]&X \ar[r]^{f} & Y,
}
\end{equation}
Unravel the definition, it is easy to see that
$\nu^!\circ f^! $ is the composition of $\sigma_f$, $\sigma_\xi$
pushfoward along $C_\xi \hookrightarrow C_f \times_Y C_\nu \hookrightarrow \fE_f \times_Y\fE_\nu$, and Gysin pullback along $X' \to \fE_f \times_Y\fE_\nu$.

Consider the double deformation space
$\pi\colon M_f \times_Y M_\nu \to \bP^1 \times \bP^1$, and principal cartier divisor $D, E$ on 
$M_f \times_Y M_\nu$ correspondes to $\{\infty\} \times \bP^1$ and $\bP^1 \times \{\infty\}$.
Given a coherent sheaf $F$ on $Y$, pullback it to $\pi^{-1}(\bA^2)=Y \times \bA^2$, then extends to $\tilde{F}$ on $M_f \times_Y M_\nu$, one can check that 
the pushforward of $\sigma_\xi \circ \sigma_f([F])$ along
$C_\xi \hookrightarrow C_f\times_Y C_\nu$ is given by  $i_E^! \circ i_D^! ([\tilde{F})]$ in $K(C_f \times_Y C_\nu)$.

Now we see that proposition is a consequence of 
$ i_D^! \circ i_E^! =i_E^!\circ i_D^!.$

\end{proof}

\begin{thm}
Virtual pullbacks are bivariant classes. 
\end{thm}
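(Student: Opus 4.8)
The plan is to assemble the theorem from the structural results already established in this section, treating ``being a bivariant class'' as a conjunction of three compatibility properties: commutation with proper DM pushforwards, commutation with flat pullbacks, and commutation with Gysin pullbacks. First I would recall that the definition of $\opk(X\xrightarrow{f}Y)$ requires exactly these three compatibilities for the collection of maps $\{g^!\colon K_0(Y')\to K_0(X')\}$ indexed by $\nu\colon Y'\to Y$, where $g$ is the base change of $f$ and the induced POT is $\mu^*E^\bullet\to\bL_g$. The first two properties are already in hand: the preceding proposition (citing \cite[Example 17.6.4]{Fu}) states that $f^!$ commutes with proper DM pushforwards and flat pullbacks, its proof reducing to Proposition \ref{biv} about $\sigma_f$ together with the known compatibilities of $\iota_*$ and the Gysin pullback $s^!$ along the zero section of the vector bundle stack.

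The remaining property---commutation with Gysin pullbacks---is where the content lies, and I would deduce it from the Commutativity proposition just proved. Specifically, a Gysin pullback in the sense used for bivariant classes arises from a morphism $\nu$ that is smooth-locally a regular closed immersion (for instance the zero section of a vector bundle stack), which carries its own virtual pullback $\nu^!$. The statement $f^!\circ\nu^! = \nu^!\circ f^!$ from the Commutativity proposition is precisely the assertion that the bivariant class $f^!$ commutes with the operation $\nu^!$. Thus I would argue that, after base change, the diagram of cartesian squares identifies the required compatibility of $\{g^!\}$ with Gysin pullbacks as an instance of this commutativity, invoking that Gysin pullbacks are themselves virtual pullbacks for regular-embedding-type POTs.

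The main obstacle I anticipate is purely bookkeeping rather than conceptual: one must check that all three compatibilities are stated for an arbitrary index $\nu\colon Y'\to Y$ uniformly, not merely for $\nu=\id_Y$. For proper pushforward and flat pullback this is exactly the form of Proposition \ref{biv}, so those extend without trouble. For Gysin pullback one must verify that the Commutativity proposition, which was phrased for the base change along a single $\nu$, applies after the further base change defining the index of the bivariant class; this amounts to observing that a composite of cartesian squares is cartesian and that the induced POTs are compatible under such composition, which is recorded in the discussion of POTs in the Preliminaries. I would therefore organize the proof as a short verification: cite the proposition giving the first two compatibilities, cite the Commutativity proposition for the third, and remark that each holds for every index $\nu$ by the base-change stability of the constructions, concluding that $f^!\in\opk(X\xrightarrow{f}Y)$.

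Throughout I would keep the argument at the level of ``this property is exactly proposition X,'' since all the analytic work---diagram chases with localization sequences, the double deformation space, and the identity $i_D^!\circ i_E^! = i_E^!\circ i_D^!$---has already been carried out in the preceding propositions. The only genuinely new sentence is the recognition that the definition of a bivariant class decomposes into precisely the three compatibilities that have now each been separately verified.
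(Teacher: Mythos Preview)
Your proposal is correct and matches the paper's approach exactly: the paper's proof is a single sentence observing that since virtual pullbacks have been shown to commute with proper DM pushforwards, flat pullbacks, and Gysin pullbacks (the latter via the Commutativity proposition, Gysin pullbacks being virtual pullbacks), they are bivariant classes by definition. Your additional remarks about base-change stability and bookkeeping are more explicit than the paper bothers to be, but the logical content is identical.
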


\begin{proof}
Since we have proved virtual pullbacks commute with proper DM pushforwards, flat pullbacks, and
Gysin pullbacks, they are bivariant classes.
\end{proof}

\begin{rmk}
In fact, as a virtual pullback
is determined by flat pullbacks, proper pushforwards and Gysin pullbacks,
one can show that virtual pullbacks commute with bivariant classes. In particular, they
commute with refined Gysin maps (See e.g., \cite[Section 3]{AS})
\end{rmk}

\subsection{Functoriality}

We will need the local description of deformation spaces.
Consider $f\colon \Spec(A/I) \to \Spec A$,
recall $\D_f$ over $\bP^1-\{0\}$ is given by the $k[T]$ algebra 
\[
A[T]\oplus\bigoplus_{n>0} \frac{I^n}{T^n} \subset A[T,T^{-1}],
\]
and we see that over $P^1-\{0\}-\{\infty\}$ where $\{\infty\}$ is the point $T=0$, we get
the $k[T, T^{-1}]$ algebra $A[T,T^{-1}]$.

\begin{lem}
Let $C$ be a cone stack over an algebraic stack $Y$, and $s\colon Y \to C$ the zero section, 
then the deformation space  $\D_{s}$ is given by the associated $C$ bundle over $\bP^1$ of the principal
$G_m$ bundle over $\bP^1$ determined by $\cO(-1)$. Here the $G_m$ action on $C$ is induced by the multiplicative action of $\bA^1$ on $C$ as a cone stack.
In particular, the intrinsic normal cone $C_s$ is isomorphism to $C$, and $\sigma_s\colon K_0(C) \to K_0(C_s)$ is the identity.

\end{lem}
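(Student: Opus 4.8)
The plan is to reduce to the local affine model recalled just above the statement, carry out the extended Rees algebra computation there, and then globalize via the descent construction of $\D$; both the identification $C_s\cong C$ and the assertion $\sigma_s=\mathrm{id}$ should fall out of the explicit local picture, the latter only after one extra $K$-theoretic input.

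First I would work smooth-locally on $Y$, so that $C=\Spec_Y S^\bullet$ for a sheaf of graded $\cO_Y$-algebras $S^\bullet=\bigoplus_{n\ge 0}S^n$ with $S^0=\cO_Y$ and, as is built into the notion of a cone stack, generated in degree one; the zero section $s$ is then the closed immersion cut out by the irrelevant ideal $I=S^{>0}$. Degree-one generation gives $I^n=S^{\ge n}$, so substituting $S^\bullet$ for $A$ and $I$ for the ideal in the displayed local formula for $\D_f$ over $\bP^1-\{0\}$ produces the $\cO_Y[T]$-algebra $\mathcal{R}=\bigoplus_{d\ge 0}\bigoplus_{m\ge -d}S^d\,T^m\subset S^\bullet[T,T^{-1}]$, which is generated over $\cO_Y[T]$ by $S^1T^{-1}$. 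Reading off the special fibre, $\mathcal{R}/T\mathcal{R}\cong\bigoplus_d S^d\,T^{-d}\cong\mathrm{gr}_I S^\bullet\cong S^\bullet$, which already yields $C_s\cong C$.

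Next I would globalize and identify $\D_s$ with the associated bundle $P\times^{G_m}C$. Over $\bP^1-\{\infty\}$ the deformation space is by construction the trivial family $C\times\bA^1$, while over $\bP^1-\{0\}$ it is $\Spec_Y\mathcal{R}$; on the overlap $\bP^1\setminus\{0,\infty\}$ both charts are $C\times(\bP^1\setminus\{0,\infty\})$, and comparing the two trivializations one finds they differ by the cone-scaling automorphism $c\mapsto T\cdot c$, which is exactly the transition cocycle of $\cO(-1)$ acting through the cone structure. Hence $\D_s$ is glued from the two charts by the $G_m$-cocycle of $\cO(-1)$, i.e. it is $P\times^{G_m}C$. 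Because both $\D$ and the associated-bundle construction are obtained by descent from a smooth groupoid presentation and are natural in the pair $(Y,C)$, this local identification descends to the asserted global isomorphism.

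Finally, for $\sigma_s=\mathrm{id}$ the crucial structural gain is that $\D_s\to\bP^1$ is now a \emph{flat, locally trivial} $C$-bundle whose general and special fibres both equal $C$. Tracking a class $[\cF]$ with $\cF=\widetilde{M}$, I would extend $\pr^*[\cF]$ from $C\times\bA^1=\D_s|_{\bP^1-\{\infty\}}$ by the Rees module $\mathcal{M}=\mathcal{R}\cdot M$ on the other chart, check that $\mathcal{M}$ agrees with $\cF\boxtimes\cO$ on the overlap and is flat over $\bP^1$, and conclude $\sigma_s[\cF]=i^![\mathcal{M}]=[\mathcal{M}/T\mathcal{M}]=[\mathrm{gr}_I M]$ under $C_s\cong C$. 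The remaining and main obstacle is the $K$-theoretic identity $[\mathrm{gr}_I M]=[M]$ in $K_0(C)$: since the $I$-adic filtration is infinite this is not formal, and I expect to extract it from the flatness of $\mathcal{M}$ over $\bP^1$ together with the linear equivalence of the fibres over $0$ and $\infty$, which forces the two Gysin specializations $i_0^!,i_\infty^!\colon K_0(\D_s)\to K_0(C)$ to agree; as the fibre over $0$ sits in the trivial chart, where specialization is visibly $\mathrm{id}$ by $\bA^1$-homotopy invariance of $K_0$, this gives $\sigma_s[\cF]=[\cF]$. In contrast to a general deformation space, where the generic and special fibres differ and $\sigma_f\ne\mathrm{id}$, it is precisely the local triviality established in the previous paragraph that renders this specialization the identity.
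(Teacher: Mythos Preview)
Your approach is essentially the paper's: compute the extended Rees algebra of the irrelevant ideal locally, recognize it as $S_\bullet[T]$ up to a grading twist, read off the transition cocycle as scaling on $C$, and globalize by descent. Two points deserve comment.

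First, for $\sigma_s=\mathrm{id}$ the paper is much more direct than your Rees-module detour. The key structural fact you already proved is that over $\bP^1-\{0\}$ the deformation space is \emph{also} a trivial product $C\times\bA^1$, with $\{\infty\}$ sitting at $T=0$. The paper simply observes that in this chart the pullback to $C$ along $\{t\}\hookrightarrow\bA^1$ is independent of $t$ (homotopy invariance), so restriction to the fibre at $T=0$ (which is $C_s$) agrees with restriction to any nearby fibre, and these nearby fibres are visibly identified with $C$ carrying the original class. There is no need to construct the Rees module $\mathcal{M}$, worry about $[\mathrm{gr}_I M]=[M]$, or invoke ``linear equivalence of fibres'' to compare $i_0^!$ and $i_\infty^!$; the latter comparison is not automatic for a nontrivial family over $\bP^1$ and in fact requires exactly the local-triviality input you establish but then underuse. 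Your final sentence shows you see this, but the paragraph above it is a detour.

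Second, a small gap in the globalization: a cone \emph{stack} is not smooth-locally of the form $\Spec_Y S^\bullet$; locally it is a quotient $[D/E]$ of an honest cone $D$ by a vector bundle $E$ acting linearly. The paper handles this by passing to a smooth cover of $Y$ over which $C$ has such a presentation, representing $s$ by the groupoid of zero sections $U\to D$ and $U\times_Y U\to D\times_C D$, and then invoking the cone case already proved. Your ``smooth-locally $C=\Spec_Y S^\bullet$'' skips this layer.
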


\begin{proof}
First consider the case when $Y$ is a scheme and $C$ is a cone. 
Locally $Y$ is given by
an affine scheme $\Spec A$ and $C =\Spec S_\bullet$, where $S_\bullet$ is an $A$ algebra generated over $S_0=A$ by $S_1$. 
As $Y \to C$ is given by the ideal $S_+=\oplus_{n>0}S_n$, $\D_{s}$ over $\bP^1-\{0\}$
is given by 
\[
S_\bullet[T] \oplus\bigoplus_{n>0}\frac{S_{+}^n}{T^n} = (\oplus_{d\ge 0}\frac{S_d}{T^d})[T].
\] 
 There is an isomorphism
\begin{equation} \label{coniso}
 S_\bullet[T] \simeq  (\oplus_{d\ge 0}\frac{S_d}{T^d})[T]
\end{equation} 
functorial in $S_\bullet$,
which maps any element $x$ in $S_d$ to $\frac{x}{T^d}$, and $T$ to $T$.

Consider the isomorphism \eqref{coniso} over $\bP^1-\{0\}-\{\infty\}$, the right hand side
is isomorphic to $S_\bullet[T,T^{-1}]$ via 
\[
S_\bullet[T] \oplus\bigoplus_{n>0}\frac{S_{+}^n}{T^n}  \subset S_\bullet[T, T^{-1}]
\]
and \eqref{coniso} induces
\[
S_\bullet[T,T^{-1}] \simeq S_\bullet[T,T^{-1}]
\]
that corresponds to
the isomorphism
\[C \times \bA^1-\{0\} \to C \times \bA^1-\{0\}\] that maps
$(a, \lambda)$ to $(\lambda^{-1}a, \lambda)$.
Now we see $\D_{s}$ is the pushout
\[
\xymatrix{
C\times \bA^1-\{0\}  
  \ar[rr]^-{      (a,t^{-1})\mapsto (a, t^{-1})    }     \ar[d]^{    (a,t^{-1}) \mapsto (t^{-1}a, t)   }
 & 
 &C \times \bP^1-\{\infty\}\\
C\times \bP^1-\{0\}&.
}
\] 
Here $t$ is the coordinate on $\bP^1-\{0\}$.

The map $\D_{s} \to C$ is given by projection to $C$ over $\bP^1 -\{\infty\}$, and
 $(b, t) \mapsto bt$ over $ \bP^1-\{0\}$.
   
By the functorial nature of these identifications and the descent construction of deformation spaces, we see that the lemma works for $Y$ an algebraic space and $C$ a cone over $Y$. In general, first choose a smooth cover of $Y$ by a scheme $U \to Y$, such that
 $C \times_Y U$ has a global presentation $[D/E]$ as a cone stack, then $s$ is represented by the groupoid
 \[
 \xymatrix{
 U\times_YU \ar[r]\ar@<.5ex>[d]\ar@<-.5ex>[d]  & D\times_CD\ar@<.5ex>[d]\ar@<-.5ex>[d]\\
 U \ar[r]                          & D,
 }
 \] and we are back to the case for cones over algebraic spaces.
 
To see that $\sigma_s$ is the identity, note that over $\bP^1 -\{0\}$,
$\D_f$ is given by $C \times \bA^1$, and the pullback to $C$ via $\{t\} \to \bA^1$ is independent of $t$.

 \end{proof}
 
 \begin{rmk}
 The proof also shows that given a commutative diagram
 \[
 \xymatrix{
 X \ar[r]^s\ar[d]^f & C\ar[d]^\xi\\
 X' \ar[r]^{s'} & C'
 }
 \] where the horizontal arrows are zero sections of cone stacks, and $\xi$ is equivariant with respect to their $\bA^1$ action, the induced map $\D_s \to \D_{s'}$ over $\bP^1$ is given fiberwise by $ \xi\colon C \to C'$.
 \end{rmk}

\begin{lem} \label{split}
Given a map $f\colon X \to Y$, a stack $ \pi\colon C \to Y$  over $Y$ with a section $s: Y \to C$.

\begin{enumerate}
\item 
The triangle
\[\xymatrix{f^*\bL_s          \ar[r]            &  \bL_{s\circ f} \ar[r] &        \bL_f}\]
associated to $X \to Y \to C$ 
is isomorphic to 
\[\xymatrix{f^*\bL_s          \ar[r]            &  f^*\bL_s \oplus \bL_f \ar[r] &        \bL_f},\]
\item We have an induced closed immersion
\[
C_{s\circ f} \to C_f \times_X f^*C_s.
\]
\item 
Assume $E_f\to \bL_f$ (resp. $E_s \to \bL_s$)  is a POT for $f$ (resp. $s$).
Then we can construct a compatible triple

\[ 
\xymatrix{
f^*E_s^{\bullet} \ar[r]\ar[d] & f^*E_s^{\bullet}\oplus E_f^{\bullet}\ar[d]\ar[r] &E_f^{\bullet} \ar[d]\\
f^*\bL_s          \ar[r]            & f^*\bL_s \oplus \bL_f \simeq \bL_{s\circ f} \ar[r]  &        \bL_f,
}
 \]

Let $\mathfrak{E}_f$ ($\mathfrak{E}_s$ ) be $h^1/h^0({E^{\bullet}_f}^{\vee})$ ($h^1/h^0({E^{\bullet}_s}^{\vee})$),
then the induced closed immersion 
\[
C_{s\circ f} \hookrightarrow \mathfrak{E}_f\times_X f^*\mathfrak{E}_s
\] 
from the middle vertical arrow
is given by the composition
\[
C_{s \circ f}  \hookrightarrow  C_f \times_X f^*C_s \hookrightarrow
\mathfrak{E}_f \times_X f^* \mathfrak{E}_s.\]

\end{enumerate}
\end{lem}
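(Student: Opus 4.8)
The plan is to treat the three parts in order: derive (1) from the retraction $\pi\circ s=\id_Y$, treat (2) as the geometric functoriality of intrinsic normal cones, and assemble (3) from the first two together with the behaviour of $h^1/h^0$ under direct sums. I expect part (2), and the compatibility that links it to (1) inside (3), to be the real content; parts (1) and (3) are largely formal.

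For (1) I would start from the distinguished triangle $f^*\bL_s\to\bL_{s\circ f}\xrightarrow{\can}\bL_f$ attached to $X\xrightarrow{f}Y\xrightarrow{s}C$ and produce a section of its canonical map $\can$. The section comes from the second factorization $X\xrightarrow{s\circ f}C\xrightarrow{\pi}Y$: its cotangent triangle supplies a canonical map $\beta\colon\bL_f=\bL_{\pi\circ s\circ f}\to\bL_{s\circ f}$. Since $\pi\circ s=\id_Y$, stacking the two functoriality squares shows that $\can\circ\beta\colon\bL_f\to\bL_f$ is the canonical self-map attached to $\id_Y$, hence the identity (one can double-check this directly on $h^{-1}$ in a local model of closed immersions). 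Thus $\beta$ splits the triangle, so $\bL_{s\circ f}\cong f^*\bL_s\oplus\bL_f$ and the triangle becomes the split one, with $\can$ the projection and $f^*\bL_s\to\bL_{s\circ f}$ the inclusion.

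Part (2) is the geometric heart and the step I expect to be the main obstacle. It asserts the functoriality of intrinsic normal cones under the composition, namely $C_{s\circ f}\hookrightarrow C_f\times_X f^*C_s$; note the target is the \emph{product of the two cones}, which is in general strictly smaller than the product of the associated normal sheaves, so this does \emph{not} follow from the cotangent splitting of (1) alone. I would prove it by reducing to a local situation: using a groupoid presentation as in Section~1 and the fact that DM morphisms are \'etale-locally closed immersions, I reduce to closed immersions of schemes $X\hookrightarrow Y\hookrightarrow C$, where the statement is the classical exact sequence of normal cones (via the associated-graded/Rees algebras, equivalently the two-step deformation to the normal cone); this is the cone-level functoriality underlying \cite{vpb}. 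Descent along the presentation, exactly as in the descent construction of $\D_f$, then globalizes the embedding to stacks.

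For (3) I would use the splitting of (1) to define the compatible triple: under $\bL_{s\circ f}\cong f^*\bL_s\oplus\bL_f$ the middle vertical arrow is $f^*\phi_s\oplus\phi_f$ and the horizontal maps are the inclusion and projection, so both squares commute by construction. To see $f^*E_s^\bullet\oplus E_f^\bullet\to\bL_{s\circ f}$ is a POT, note it is perfect of tor-amplitude $[-1,1]$ (a sum of two such), while $\mathrm{cone}(\phi_f)$ and $\mathrm{cone}(f^*\phi_s)=Lf^*\,\mathrm{cone}(\phi_s)$ both lie in $D^{\le-2}_\qcoh$, the latter because $Lf^*$ is right $t$-exact; hence the summed map has cone in $D^{\le-2}_\qcoh$, i.e.\ is an obstruction theory. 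Dualizing the direct sum and applying $h^1/h^0$ identifies $\fE_{s\circ f}=h^1/h^0\!\big((f^*E_s^\bullet\oplus E_f^\bullet)^\vee\big)$ with $\fE_f\times_X f^*\fE_s$. Finally, that the induced embedding $C_{s\circ f}\hookrightarrow\fE_f\times_X f^*\fE_s$ factors as $C_{s\circ f}\hookrightarrow C_f\times_X f^*C_s\hookrightarrow\fE_f\times_X f^*\fE_s$ follows from naturality of the Behrend--Fantechi embedding $C_\bullet\hookrightarrow h^1/h^0(\bL^\vee)$ together with the compatibility of (1) and (2): the isomorphism $\mathfrak{N}_{s\circ f}\cong\mathfrak{N}_f\times_X f^*\mathfrak{N}_s$ from (1) must carry the subcone $C_{s\circ f}$ onto the image of the embedding of (2). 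Verifying this last compatibility carefully is where the bookkeeping concentrates, and together with (2) it is the crux of the lemma.
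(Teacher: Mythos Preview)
Your proposal is correct, and parts (1) and (3) match the paper's argument essentially line for line. The genuine difference is in part (2).

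You treat (2) as requiring a local reduction to closed immersions of schemes plus descent, on the grounds that the target $C_f\times_X f^*C_s$ is strictly smaller than the product of normal sheaves and hence the cotangent splitting of (1) cannot suffice. The paper's argument is shorter and avoids any local computation: it first observes that the two projections $C_{s\circ f}\to C_f$ and $C_{s\circ f}\to f^*C_s$ already exist at the cone level, coming from functoriality of intrinsic normal cones applied to the squares
\[
\xymatrix{X\ar[r]\ar[d]_{s\circ f}&X\ar[d]^f\\ C\ar[r]^\pi&Y}
\qquad\text{and}\qquad
\xymatrix{X\ar[r]^f\ar[d]_{s\circ f}&Y\ar[d]^s\\ C\ar[r]&C.}
\]
This produces the map $C_{s\circ f}\to C_f\times_X f^*C_s$ directly. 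To see it is a closed immersion, the paper embeds both sides into the corresponding normal sheaves and invokes the isomorphism $N_{s\circ f}\simeq N_f\times_X f^*N_s$ from (1): in the resulting commutative square the verticals are closed immersions and the bottom is an isomorphism, so the top is a closed immersion by cancellation. Thus (1) \emph{does} do most of the work once the cone-level map is in hand; your local-model argument would also succeed but is heavier than necessary. The paper's route has the additional payoff that the compatibility you flag at the end of (3) --- that the embedding into $\fE_f\times_X f^*\fE_s$ factors through $C_f\times_X f^*C_s$ --- is immediate from the same commutative square, rather than requiring separate bookkeeping.
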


\begin{proof}
(1)The diagram
\[
\xymatrix{
X\ar[r] \ar[d]^f & X\ar[r]\ar[d]^{s\circ f} &X\ar[d]^f\\
Y \ar[r]^s & C\ar[r]^\pi &Y\\
}\]
induces \xymatrix{\bL_f \ar[r] \ar@/^1pc/[rr]^{\id}&\bL_{s\circ f} \ar[r] &\bL_f}.
It is then easy to check the following two triangles are isomorphic:
\[
\xymatrix{
f^*\bL_s          \ar[r]\ar[d]    & f^*\bL_s \oplus \bL_f  \ar[r] \ar[d] &        \bL_f\ar[d]\\
f^*\bL_s          \ar[r]            & \bL_{s\circ f} \ar[r]  &        \bL_f.
}
\]
(2)
The isomorphism $L_{s\circ f} \simeq  f^*\bL_s \oplus \bL_f$
induces an isomorphism between intrinsic normal sheaves
\[
N_{s\circ f} \simeq N_f\times_X f^*N_s.
\]

The map $C_{s\circ f} \to C_f \times_X f^*C_s$ is determined by $C_{s\circ f} \to C_f$ and
$C_{s\circ f} \to f^*C_s$, they are induced by 
\[
\xymatrix{
X\ar[r]\ar[d]^{s\circ f} &X\ar[d]^f\\
C\ar[r]^\pi &Y,
}
\]
and
\[
\xymatrix{
X\ar[r]^f\ar[d]^{s\circ f} &Y\ar[d]^s\\
C\ar[r] &C.
}
\]
Thus we have a commutative diagram
\[
\xymatrix{
C_{s\circ f} \ar[r]\ar[d]  & C_f \times_X f^*C_s  \ar[d]\\
N_{s\circ f} \ar[r]          & N_f \times_X f^*N_s.
}
\]
As vertical arrows are closed immersions and the bottom arrow is an isomorphism, the top arrow is a closed immersion.

(3) follows from (2).

\end{proof}

\begin{prop}[Functoriality] \label{prop:func}
Let $f, g$ be DM morphisms, denote their composition by $h$:
\[
\xymatrix{
X \ar[r]^-f \ar@/_/[rr]_-h& Y\ar[r]^-g & S.
}
\] 

Assume we have a compatible triple between POTs
\[ 
\xymatrix{
f^*E_g^{\bullet} \ar[r]\ar[d] & E_h^{\bullet}\ar[d]\ar[r] &E_f^{\bullet} \ar[d]\\
f^*\bL_g          \ar[r]            &  \bL_h \ar[r] &        \bL_f,
}
 \]  i.e., vertical arrows are POTs, and horizontal arrows are distinguished triangles, 
then \[
h^! =f^!\circ g^!.\]

\end{prop}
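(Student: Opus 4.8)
The plan is to factor the problem into a \emph{linear} base case, handled by Lemma \ref{split}, and a \emph{deformation} step that reduces to it. Write $\iota_g\colon C_g\hookrightarrow\fE_g$ for the cone embedding attached to the POT of $g$ and $s_g\colon Y\to\fE_g$ for the zero section, so that by definition $g^!=s_g^!\circ\iota_{g*}\circ\sigma_g$, and similarly for $f$ and $h$. Since $f^!\circ g^!=f^!\circ s_g^!\circ(\iota_{g*}\circ\sigma_g)$, it suffices to prove the two identities
\[
(\mathrm{I})\quad (s_g\circ f)^!=f^!\circ s_g^!\colon K_0(\fE_g)\to K_0(X),
\qquad
(\mathrm{II})\quad h^!=(s_g\circ f)^!\circ(\iota_{g*}\circ\sigma_g),
\]
where in $(\mathrm I)$ the composite $X\xrightarrow{f}Y\xrightarrow{s_g}\fE_g$ is equipped with the POT furnished by Lemma \ref{split} applied to the POTs of $f$ and of $s_g$. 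Granting both, $f^!\circ g^!=(s_g\circ f)^!\circ\iota_{g*}\circ\sigma_g=h^!$.

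For $(\mathrm I)$ I would argue purely linearly. Since $s_g$ is the zero section of the vector bundle stack $\fE_g$, the preceding lemma on zero sections of cone stacks gives $C_{s_g}\cong\fE_g$ and $\sigma_{s_g}=\id$, so $s_g^!$ is the zero-section Gysin pullback. Lemma \ref{split}(3) produces the compatible triple whose middle vertical arrow is the POT $f^*E_{s_g}^\bullet\oplus E_f^\bullet\to\bL_{s_g\circ f}$; dualizing the split triangle identifies $\fE_{s_g\circ f}=\fE_f\times_X f^*\fE_g$, sitting in the short exact sequence $0\to\fE_f\to\fE_{s_g\circ f}\to f^*\fE_g\to 0$, and it gives the factorization of cone embeddings
\[
C_{s_g\circ f}\hookrightarrow C_f\times_X f^*\fE_g\hookrightarrow \fE_f\times_X f^*\fE_g.
\]
Because Gysin pullbacks compose, the zero-section pullback of $\fE_{s_g\circ f}$ factors as $s_{\fE_f}^!$ after the Gysin pullback of the closed immersion $\fE_f\hookrightarrow\fE_{s_g\circ f}$ (with conormal $f^*\fE_g^\vee$). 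Matching this factorization with the cone embedding of Lemma \ref{split}, and using $\sigma_{s_g}=\id$ to reduce $\sigma_{s_g\circ f}$ to $\sigma_f$, collapses both $(s_g\circ f)^!$ and $f^!\circ s_g^!$ to the same composite.

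The substance is in $(\mathrm{II})$, and this is the step I expect to be the main obstacle. Identity $(\mathrm{II})$ says that specializing the target of $g$ to $\fE_g$ along the deformation $\D_g\to\bP^1$ turns $h^!$ into $(s_g\circ f)^!$. To prove it I would compare the deformation space $\D_h$ of the composite with the iterated construction obtained by first deforming $g$ (via $\D_g$) and then $f$ inside the resulting family; this produces a space over $\bP^1\times\bP^1$ whose two Cartier divisors give the two specializations and whose relevant fibres are identified by \cite[Proposition 1]{KKP}. The bivariance of $\sigma$ (Proposition \ref{biv}) together with the commutativity of Gysin pullbacks then lets one transport $\sigma_h$ through this double deformation and recognise it as $\sigma_{s_g\circ f}\circ\iota_{g*}\circ\sigma_g$ at the level of cone stacks; checking that the cone embeddings and zero-section pullbacks propagate compatibly through the family is the part that requires genuine effort.
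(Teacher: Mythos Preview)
Your decomposition into (I) and (II) is the right architecture and matches the paper's, but both halves diverge from the paper in ways worth flagging.

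For (II), the paper does \emph{not} use a $\bP^1\times\bP^1$ double deformation. It works over the single $\bP^1$ underlying $\D_g$: one considers $\varkappa\colon X\times\bP^1\to\D_g$, and the key input is \cite[Proposition~1]{KKP}, which identifies the intrinsic normal sheaf $N_{X\times\bP^1/\D_g}$ with $h^1/h^0(c(\mu)^\vee)$ for an explicit map $\mu$ built from the sections $(T,U)$ of $\cO_{\bP^1}(1)$ and the canonical map $f^*\bL_g\to\bL_h$. Replacing $\bL$'s by $E$'s in $\mu$ produces a POT for $\varkappa$ whose fibers over $0$ and $\infty$ are the given POTs for $h$ and for $s\circ f$ (with $s\colon Y\to C_g$ the zero section); then $i_0^!\varkappa^!=i_\infty^!\varkappa^!$ on $K_0(X\times\bP^1)$ yields $h^!=(s\circ f)^!\circ\sigma_g$. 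Your $\bP^1\times\bP^1$ idea is not absurd, but you have not said what the total space is, how a POT lives on it, or how \cite[Proposition~1]{KKP} enters; as written it is a different and unspecified construction.

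For (I) there is a genuine gap. You assert that ``$\sigma_{s_g}=\id$ reduces $\sigma_{s_g\circ f}$ to $\sigma_f$,'' but this is exactly the nontrivial content. Lemma~\ref{split} gives only a closed immersion $C_{s_g\circ f}\hookrightarrow C_f\times_X f^*\fE_g$, not an equality, and there is no evident map of deformation spaces factoring $\sigma_{s_g\circ f}$ through $\sigma_f$; knowing $\sigma_{s_g}=\id$ says nothing about $\D_{s_g\circ f}$. The paper does not treat (I) linearly. Instead it uses the already-proved commutativity of virtual pullbacks on the cartesian square with rows $X\to f^*\fE_g\to X$ and $Y\to\fE_g\to Y$ to rewrite $f^!\circ s_g^!$ as $s_X^!\circ F^!$, where now $s_X\colon X\to f^*\fE_g$ is a zero section and $F\colon f^*\fE_g\to\fE_g$ is base-changed from $f$. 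This recasts (I) as the same shape of statement as (II), but for $X\xrightarrow{s_X} f^*\fE_g\xrightarrow{F}\fE_g$; re-applying the Step~1 deformation argument to \emph{this} composition lands on $X\to f^*\fE_g\to f^*\fE_g\times_X\fE_f$, a composition of zero sections of vector bundle stacks, where functoriality is ordinary functoriality of Gysin pullbacks. So the paper's proof of (I) is itself a deformation argument, bootstrapped from (II) via commutativity; your ``purely linear'' shortcut does not go through as written.
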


\begin{proof} 

Denote by $\mathfrak{E}_f$ the vector bundle stack $h^1/h^0({E^{\bullet}_f}^{\vee})$,  similarly we have $\mathfrak{E}_g$, $\mathfrak{E}_h$.

Step 1:
It is enough to show  $h^! =f^!\circ g^! \colon K_0(S) \to K_0(X)$, since the situation is identical under base change.

Consider the map $\varkappa: X \times \bP^1 \to \D_g$ over $\bP^1$ 
and the cartesian diagram
\[
\xymatrix{
X\times \{\infty\} \ar[r]\ar[d]^{s\circ f} 
  & X \times \bP^1\ar[d]^{\varkappa} 
     & X \times \{0\} \ar[l]\ar[d]^{h}\\
C_g \ar[r] \ar[d] 
  & \D_g              \ar[d]
     &  S \times \{0\} \ar[l]\ar[d]\\
\{ \infty \}   \ar[r]^{i_\infty}  
   &  \bP^1                      
      & \{0\} . \ar[l]_{i_0} 
}
\]

In the proof of Theorem 1 in \cite{KKP}, a virtual pullback 
$\varkappa$ is constructed such that
 \[ \varkappa^! = (s \circ f)^!=h^! .\] 
 Here the virtual pullback $(s\circ  f)^!$ is defined by Lemma \ref{split} using the POT of $f$ and $s$, 
 the POT for $s$ corresponds to the closed immersion $C_s \simeq C_g \to \mathfrak{E}_g$. 
Construction of $\varkappa^!$ is recalled in the remark below.

Then argue as in the proof of Theorem 6.5 in \cite{Fu}, we see $h^! =f^!\circ g^!$ follows from
\[
f^!\circ s^! = (s\circ f)^! \colon K_0(C_g) \to K(X),
\] 
or the functoriality
for the map \xymatrix{X \ar[r]^f&Y \ar[r]^s & C_g}. More precisely, for any element $\sF \in K_0(S)$, we can find $\sF^\sim \in K_0(\D_g)$ such that its restriction to $S \times \bA^1$ equals
the pullback of $\sF$ to $S \times \bA^1$, then
\[
i_0^!(\sF^\sim) = \sF, \quad i_{\infty}^!(\sF^\sim) = \sigma_g(\sF).
\]
Here
$i_0^!$ and  $i_{\infty}^!$ are Gysin pullbacks, and $\sigma_g\colon K(S) \to K(C_g)$ is the map defined by deformation
 to the normal cone. 
As  $g^!(\sF) = s^!(\sigma_g(\sF))$, if we assume $f^!\circ s^! = (s\circ f)^!$, then
\[
f^!(g^!(\sF)) = f^! \circ s^!(\sigma_g(\sF))
=(s\circ f)^! \circ i_{\infty}^!(\sF^\sim)=i_\infty^!\circ \varkappa^!(\sF^\sim).
\]
Thus \[ h^!(\sF)=h^! (i_0^!(\sF^\sim))=
i_0!\circ \varkappa^!(\sF^\sim)=i_\infty^!\circ \varkappa^!(\sF^\sim).\]
Here we used $i_t^*\colon K_0(X\times \bP^1) \to K_0(X)$ is independent of $t$.
This follows from $i_t^* =(\pr_X)_*\circ (i_t)_*\circ i_t^*=(\pr_X)_*\circ c_1(\cO_{\bP^1}(1))$.

As we have a cartesian square
\[
\xymatrix{
Y \ar[r]\ar[d]  
   & C_g \ar[d] \\
Y \ar[r]&  \fE_g 
}
\] where the horizontal arrows are zero sections, we only need to prove
functoriality for $X \to Y \to \fE_g$, where the POT for $Y \to \fE_g$
is given by the identification of its intrinsic normal cone with $\fE_g$, and the induced
virtual pullback is the Gysin pullback.

Step 2:
Abusing notation, we use $s\colon X \to \fE_g$ to denote the zero section of 
$\fE_g$.
Consider the cartesian diagram 
\[
\xymatrix{
X \ar[r]^{s_X}\ar[d]^f & f^*\fE_g\ar[d]^-F \ar[r] & X\ar[d]^f\\
Y \ar[r]^{s} &  \fE_g \ar[r] &Y.
}\]
As 
$f^!\circ s^! = s_X^! \circ F^!$ 
by the commutativities of virtual pullbacks,
the identity $f^!\circ s^! = (s\circ f)^!$ is equivalent to  
$s_X^! \circ F^! = (F\circ s_X)^!$, or the functoriality for
\xymatrix{X \ar[r]^-{s_X}& f^*\fE_g\ar[r]^F & \fE_g}. 
Here $s_X^!,  F^!$ are induced from $s, f$ by base change, and we need to check
there is a compatible triple
\[ 
\xymatrix{
E_f^{\bullet} \ar[r]\ar[d] 
   &E_f^{\bullet}\oplus f^*E_s^{\bullet}\ar[d]\ar[r] 
      & f^*E_s^{\bullet} \ar[d]\\
{s_X}^*\bL_F          \ar[r]          
   &  \bL_f\oplus f^*\bL_s\simeq \bL_{s\circ f} \ar[r]
      &        \bL_{s_X},
}
\]
which follows from the commutativity of the diagram 
\[
\xymatrix{
\bL_f \ar[r]\ar[d]
  &  \bL_f\oplus f^*\bL_s\ar[d]\ar[r]
     & f^*\bL_s\ar[d]\\
 {s_X}^*\bL_F   \ar[r]          
   &   \bL_{s\circ f} \ar[r]
       &  \bL_{s_X}.
}
\]

Step 3:
By the arguments in Step 1, we see that 
$s_X^! \circ F^! = (F\circ s_X)^!$ follows from
the functoriality for the map 
\[\xymatrix{ X \ar[r] &f^*\fE_g \ar[r] &\fE_F} \]
here $\fE_F \simeq f^*\fE_g\times_X \fE_f$ by our construction.
Now functoriality means the Gysin pullback along $X \to f^*\fE_g\times_X \fE_f$ is the composition of
Gysin pullbacks along $f^*\fE_g \to f^*\fE_g\times_X \fE_f$ and $X \to f^*\fE_g$, and this is known.

\end{proof}

\begin{remark}
We recall the construction of $\varkappa^!$, which is determined by a closed embedding of 
$N_{X\times \bP^1}\D_YS$ into some vector bundle stack.

Consider the  following map between distinguished triangles over $X \times \bP^1$:
\[ 
\xymatrix{
f^*E_g^{\bullet}\otimes \cO_{\bP^1}(-1) \ar[r]^-{\nu}\ar[d] & f^*E_g^{\bullet}  \oplus E_h^{\bullet}\ar[d]\ar[r] & c(\nu)\ar[d]\\
f^*\bL_g \otimes \cO_{\bP^1}(-1)   \ar[r]^-{\mu}            &      f^*\bL_g  \oplus \bL_h \ar[r] &       c(\mu),
}
 \]
where $c(\mu), c(\nu)$ are the mapping cones of $\mu, \nu$ resp. 
$\mu$ is defined as the composition
\[
\xymatrix{ 
f^*\bL_g \otimes \cO_{\bP^1}(-1)   \ar[r]^-{(T,U)} 
& 
f^*\bL_g  \otimes  (\cO_{\bP^1}\oplus\cO_{\bP^1} ) \simeq f^*\bL_g  \oplus  f^*\bL_g  \ar[r]^-{(\id, \can)}
& 
f^*\bL_g  \oplus \bL_h .
}
\]
Here $T$ and $U$ are homogeneous coordinates on $\bP^1$, $\can$ is the canonical map  $f^*\bL_g \to \bL_h$.
The map $\nu$ is defined similarly.

It is easy to check $c(\nu)$ is a two term complex of vector bundles
as there is a distinguished triangle
\[
f^*E_g^{\bullet} \otimes \cO_{\bP^1}(1) \to c(\nu) \to E_{f}^{\bullet}.
\] Here $\cO_{\bP^1}(1)$ comes from the exact sequence
\[
\xymatrix{
\cO_{\bP^1}(-1)   \ar[r]^-{(T,U)}  & \cO_{\bP^1}\oplus\cO_{\bP^1} \ar[r] &\cO_{\bP^1}(1) }.
\]

Note that $c(\nu) \to c(\mu)$ is 1-connective, or its cone sits in degree $\le -2$, therefore we have
 a closed immersion:
\begin{equation} \label{immersion}
h^1/h^0(c(\mu)^{\vee}) \to h^1/h^0(c(\nu)^{\vee}).
\end{equation} 
Recall \cite[Proposition 1]{KKP} says that 
\[h^1/h^0(c(\mu)^{\vee}) \simeq N_{X\times \bP^1}\D_YS,\] 
so \eqref{immersion} embeds 
$N_{X\times \bP^1}\D_YS$ into a vector bundle stack $h^1/h^0(c(\nu)^{\vee})$.

\end{remark}

\begin{rmk}
When $X \to Y \to S$ are regular closed embeddings, we obtained functoriality
for Gysin pullbacks. 
\end{rmk}


\subsection{Excess intersection formula}
Assume $f$ is a closed imbedding and consider a POT $E^{\bullet} \to \bL_{f}$ for $f$. Since $h^0(E^{\bullet})=h^0(\bL_f)=0$, we can assume $E^{\bullet} = E[1]$, where $E$ is locally free sheave.

\begin{prop}  
\label{prop:excess}
Assume $f$ is a closed imbedding, $E[1] \to \bL_{f}$ a POT, where $E$ is a locally free sheaf.
We have an excess intersection formula,

\[ f^{!}f_{*} =  \Lambda_{-1}(E)
\]
\end{prop}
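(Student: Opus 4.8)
The plan is to reduce the computation of $f^! f_*$ to the self-intersection formula for the zero section of a vector bundle, which is already recorded in Section 1. The crucial initial observation is that, because $E^{\bullet}=E[1]$ with $E$ locally free, the vector bundle stack $\fE_f = h^1/h^0((E[1])^{\vee})$ is the honest total space of the vector bundle $E^{\vee}$: indeed $(E[1])^{\vee}=E^{\vee}[-1]$ has $E^{\vee}$ in degree $1$ and nothing in degree $0$, so $h^1/h^0$ has no stacky part. Its zero section $s\colon X \to E^{\vee}$ is then a regular embedding with normal bundle $N_s = E^{\vee}$, hence conormal bundle $N_s^{\vee}\simeq E$.

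First I would analyze $\sigma_f \circ f_*$. Since $f$ is a closed immersion it is a monomorphism, so $X\times_Y X \simeq X$; applying Proposition \ref{biv}(1) to the cartesian square obtained by base changing $f$ along itself (taking $\nu = f$, which is proper DM), the horizontal base change becomes $g=\id_X$, its normal cone $C_g = X$ is the trivial (zero) cone, and $\sigma_{\id_X}=\id$ because $\D_{\id_X}\simeq X\times\bP^1$ is the trivial family. Proposition \ref{biv}(1) then reads $\xi_*\circ \sigma_{\id_X} = \sigma_f\circ f_*$, where $\xi\colon C_{\id_X}\to C_f$ is the induced closed immersion. Since $\xi$ is a morphism of cones over $X$ emanating from the zero cone, it must factor through the vertex, i.e.\ $\xi = s_0$ is the zero section $s_0\colon X\to C_f$. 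Thus $\sigma_f\circ f_* = (s_0)_*$, which geometrically just says that $X\times\bA^1\subset Y\times\bA^1$ degenerates to the zero section $X\subset C_f$.

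Next I would compose with the embedding $\iota\colon C_f\hookrightarrow \fE_f = E^{\vee}$. As $\iota$ is a closed immersion of cones over $X$, it carries the vertex of $C_f$ to the vertex of $E^{\vee}$, so $\iota\circ s_0 = s$ is precisely the zero section of $E^{\vee}$, giving $\iota_*(s_0)_* = s_*$. Therefore, for a coherent sheaf $F$ on $X$,
\[
f^! f_* [F] = s^!\iota_*\sigma_f f_*[F] = s^!\iota_*(s_0)_*[F] = s^! s_*[F] = [F]\otimes \Lambda_{-1}(N_s^{\vee}) = [F]\otimes \Lambda_{-1}(E),
\]
where the fourth equality is the self-intersection formula for the regular embedding $s$ recalled in Section 1, and the last uses $N_s^{\vee}\simeq E$. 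This yields $f^! f_* = \Lambda_{-1}(E)$.

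I expect the only genuine subtlety to be the identification $\sigma_f\circ f_* = (s_0)_*$ in the first step, and within it the two points that $\sigma_{\id_X}=\id$ and that the induced map $\xi$ really is the zero section $s_0$ of $C_f$; both are geometrically transparent but deserve a careful word, since $\xi$ being a cone morphism out of the zero cone is what forces it to be the vertex inclusion. Once this is settled, the remaining steps are formal manipulations together with the already-established self-intersection formula, so no further obstacle is anticipated.
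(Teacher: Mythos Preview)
Your argument is correct and is essentially the same as the paper's. The paper's proof just says to base-change $f$ along itself and invoke the commutativity of virtual pullbacks with proper pushforwards; unpacking that statement via the definition $f^!=s^!\circ\iota_*\circ\sigma_f$ and Proposition~\ref{biv}(1) is exactly what you do, and the computation terminates in the same self-intersection formula $s^!s_*=\Lambda_{-1}(N_s^\vee)=\Lambda_{-1}(E)$ for the zero section of $E^\vee$.
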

%
\begin{proof}
Consider the cartesian diagram
\[\xymatrix{
X \ar[r]\ar[d]& X\ar[d]^f\\
X \ar[r]^f & Y.
}\]
Use the fact that virtual pullbacks and push forwards commute.

\end{proof}

\subsection{Remarks}

\subsubsection{}
\label{rmk}

To define a virtual pullback on $f\colon X\to Y$, we have assumed $Y$ is qcqs,
When $Y$ is only quasi-separated\footnote{This is usually built into the definition of algebraic stacks in the literature.}, but 
$f$ is a composition of  a map $\tilde{f}\colon X \to Z$ with $Z$ being of finite type and quasi-separated over $k$, and an \'etale
map  $j\colon \colon Z \to Y$, then we can still define the map
$\sigma_{\tilde{f}}\circ j^!  \colon K_0(Y) \to K_0(C_f)$.
Note that $C_{\tilde{f}} \simeq C_f$ follows from \cite[Proposition 3.14]{BF}.

Using functoriality, it is easy to check that the map
$\sigma_{\tilde{f}}\circ j^!$ is independant of the factorization $f=j\circ \tilde{f}$, thus by abusing notation we denote the resulting pullback by $\sigma_f$.

Then one can define a pullback $f^!$ as before using $\sigma_f$, as $\sigma_f$ is  the composition of an \'etale pullback and a virtual pullback, it is straightforward to extended results in this section to this slightly more general situation.
\subsubsection{}
Twisted virtual structure sheaves correspond to twisted virtual pullbacks of the form
\[
\id_X^{\cP^\bullet} \circ f^!
\] where $\cP^\bullet$ is a perfect complex on $X$, $\id_X^{\cP^\bullet} \in 
\opk(X \xrightarrow{\id_X} X)$
the bivariant class induced by derived tensoring with $\cP^\bullet$.
Properties of twisted virtual pullbacks follow from those of virtual pullbacks.

\section{A Virtual Localization Formula} \label{s3}

The proof of the virtual localization formula in \cite{GP} can
be streamlined using virtual pullbacks, and an optimal form is obtained in \cite{CKL}. 
The arguments in \cite[Section 3]{CKL} can be used to prove the $K$-theoretic virtual localization formula conjectured in \cite[Conjecture 7.2]{rr},
the keypoint is that a modified POT of the fixed substack is compatibile with the POT of the ambient stack, then
the functoriality of virtual pullbacks gives the virtual localization formula.

\begin{rmk}
The localization formula  \cite[Theorem 5.3.1]{CK}
for dg-schemes is also proved by constructing a virtual pullback $\pi_0(i)^!$.
\end{rmk}

\subsection{Notation and Conventions}
We will use $T$ to denote the torus $\bC^*$.

\subsubsection{}
A $T$-stack $X$ is an algebraic stack $X$ with a $T$ action, a $T$-map $f\colon X \to Y$
between $T$-stacks is a map that respects the $T$ action on $X$ and $Y$.
We will denote $X_T$ the quotient stack $[X/T]$,  and $\pi_X$ the quotient map $X \to X_T$.
For a $T$-map $f$,  we have an induced map $f_T:X_T \to Y_T$ between
$X_T$ and $Y_T$.  

\begin{rmk}
There is an equivalence between the 2-category of $T$-stacks
and the 2-category of stacks over $BT$.
\end{rmk}
\subsubsection{}

For a $T$-stack $X$, $\pi_X^*$ induces an equivalence between
the category of coherent sheaves on $X_T$ and 
the category of $T$ equivariant coherent sheaves on $X$ is 
A $T$-equivariant coherent sheaf $F$ on $X$ corresponds to a coherent sheaf $F_T$ on $X_T$ 
such that $F = \pi_X^*(F_T)$.

Denote by $K^T_0(X)$ the $K$ group of equivariant coherent sheaves on $X$, with $\mathbb{Q}$ coefficients . Via $\pi_X^*$, $K^T_0(X)$ is canonically isomorphic to 
$K_0(X_T)$.

It is easy to show $K^T_0(\Spec\bC)\simeq \bQ[t^{\pm 1}]$, and 
$K^T_0(X)$ is a $K^T_0(\Spec\bC)$  module  as $X_T$ is a stack over $BT$.

Recall $\Lambda_{-1}\colon K_0(X) \to K_0(X)$ is given by $[V] \to \sum_i (-1)^i [\Lambda^i V].$
Its equivariant version $\Lambda_{-1}^T \colon K^T(X) \to K^T(X)$ is simply defined as
$\Lambda_{-1}\colon K_0(X_T) \to K_0(X_T)$.

\subsubsection{}
Given a $T$-map $f\colon X \to Y$, a $T$-equivariant POT $\phi\colon E^{\bullet} \to \bL_{f}$ for $f$ can be identified with a POT for $f_T$ given by
$\phi_T\colon E^{\bullet}_T \to \bL_{f_T}^{\bullet}$.
The virtual structure sheaves  
$\cO_\phi \in  K^T_0(X)$ and 
$\cO_{\phi_T}\in K_0(X_T)$ are related by  
$\cO_\phi = \pi_X^* (\cO_{\phi_T})$, and therefore can be identified via
$\pi_X^*\colon K_0(X_T) \to K^T_0(X)$.

\subsubsection{}
Let $X$ be $T$ stack, DM and of finite type over $\bC$, $X^T$ its fixed substack, we will use $i\colon X^T \to X$
to denote the inclusion of $X^T$ as a substack.

Let  $\phi\colon E^{\bullet} \to \bL_{X}^{\bullet}$ be 
a $T$-equivariant POT for $X$.
 We have a decomposition 
 \[
 i^*{E^{\bullet}} =(i^*{E^{\bullet}} )^\fix\oplus  (i^*{E^{\bullet}} )^\mov
 \] of $ i^*{E^{\bullet}}$ into its fixed and moving parts, which come from $T$-eigensheaves
 of $ i^*{E^{\bullet}}$ with zero and nonzero weights respectively.
 
 We have an induced ($T$-equivariant) POT for $X^T$:
\[
\phi^T \colon (i^*{E^{\bullet}} )^\fix \to (i^*\bL_X)^\fix \to \bL_{X^T}.
\]
(See \cite[Proposition 1]{GP} and \cite[Lemma 3.2]{CKL}.)

\subsection{A Virtual Localization Formula}

\begin{thm}

Assume $X$ is a scheme of finite type over $\bC$ with a $T$ action,  
and 
\[i \colon X^T \to X\]
the inclusion of the $T$ fixed loci.
Let $\phi \colon E^{\bullet} \to \bL_{X}^{\bullet}$  be a  $T$-equivariant POT.
Assume ${N^\vir}^\vee= {(i^*E^\bullet)^\mov}$ has a global resolution $N^{-1}\to N^0$
by locally free sheaves on $X^T$.

Under these assumptions, we have
\[
\cO_X^{\vir} = {i_T}_{*}\lp
\frac{ \cO_{X^T}^{\vir} }{ \Lambda_{-1}^T( [{N^{\vir}}^{\vee}]) }
\rp
\]  in $K^T_0(X)\otimes_{\bQ[t,t^{-1}]}\bQ(t)$.

Here
$\cO_X^{\vir}:= \cO_\phi$, $\cO_{X^T}^{\vir}: = \cO_{\phi^T}$,
$[{N^{\vir}}^{\vee}]= [N^0]-[N^{-1}]$ in $K^T_0(X^T)$.

\end{thm}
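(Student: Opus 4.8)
The plan is to deduce the localization formula from the functoriality and excess intersection properties of virtual pullbacks established earlier in this section, following the strategy of \cite{CKL}. The key geometric input is the compatibility of the induced POT $\phi^T$ on $X^T$ with the ambient POT $\phi$ on $X$, which allows us to relate $\cO_X^{\vir}$ and $\cO_{X^T}^{\vir}$ through the closed immersion $i\colon X^T \to X$.

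**First I would** set up the comparison via the virtual pullback $i^!$ associated to a suitable POT for $i$. The decomposition $i^*E^\bullet = (i^*E^\bullet)^{\fix} \oplus (i^*E^\bullet)^{\mov}$ into fixed and moving parts is the crucial algebraic fact: the fixed part gives the POT $\phi^T$ for $X^T$ via Proposition 1 of \cite{GP}, while the moving part is by hypothesis $\vee$-dual to ${N^{\vir}}^{\vee}$ with global resolution $N^{-1}\to N^0$. Using the functoriality from Proposition \ref{prop:func} applied to the factorization through $i$, and the excess intersection formula of Proposition \ref{prop:excess}, I would establish the relation
\[
i^! \circ i_*\, \cO_{X^T}^{\vir} = \Lambda_{-1}^T\bigl([{N^{\vir}}^{\vee}]\bigr)\cdot \cO_{X^T}^{\vir}
\]
in $K_0^T(X^T)$, where the appearance of $\Lambda_{-1}^T$ comes precisely from the moving part of the obstruction theory playing the role of the ``excess'' normal bundle. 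This is the $K$-theoretic incarnation of the self-intersection computation.

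**The heart of the proof** is then to invert $\Lambda_{-1}^T([{N^{\vir}}^{\vee}])$ after tensoring with $\bQ(t)$. The localization theorem for $T$-equivariant $K$-theory guarantees that $i_*\colon K_0^T(X^T)\otimes \bQ(t) \to K_0^T(X)\otimes \bQ(t)$ becomes an isomorphism after this base change, since the moving weights of the normal directions become invertible over $\bQ(t)$; concretely $\Lambda_{-1}^T([{N^{\vir}}^{\vee}])$ is invertible in the localized ring because its restriction to each fixed point has nonzero $T$-weights. Combining the self-intersection relation with the surjectivity of $i_*$ (and the compatibility $\cO_X^{\vir} = i^!(\text{appropriate class})$ coming from the compatible triple of POTs), I would conclude
\[
\cO_X^{\vir} = i_*\left(\frac{\cO_{X^T}^{\vir}}{\Lambda_{-1}^T([{N^{\vir}}^{\vee}])}\right),
\]
translating between $K_0^T$ and $K_0(X_T)$ via $\pi_X^*$ as in the preliminary subsections.

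**The main obstacle** I anticipate is verifying that the induced POT $\phi^T$ on $X^T$ is genuinely \emph{compatible} with $\phi$ in the precise sense required by Proposition \ref{prop:func}, i.e.\ that the fixed and moving decomposition assembles into a compatible triple of distinguished triangles relating $\bL_i$, $\bL_{X^T}$, and the restriction of $\bL_X$. This is exactly the technical content of \cite[Lemma 3.2]{CKL}, and getting the obstruction theory for $i$ itself (governed by the moving part $(i^*E^\bullet)^{\mov}$) to fit into the triangle is where the real work lies. Once this compatibility is in place, the remainder is a formal consequence of functoriality, excess intersection, and the $K$-theoretic localization theorem.
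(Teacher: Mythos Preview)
Your overall architecture matches the paper's: a compatible triple involving $i$, functoriality (Proposition \ref{prop:func}), the excess formula (Proposition \ref{prop:excess}), and the localization isomorphism for $i_*$ over $\bQ(t)$. But there is a genuine gap in how you assemble these pieces.

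The relation you claim,
\[
i^!\circ i_*\,\cO_{X^T}^{\vir}=\Lambda_{-1}^T\bigl([{N^{\vir}}^{\vee}]\bigr)\cdot\cO_{X^T}^{\vir},
\]
cannot come from Proposition \ref{prop:excess}. That proposition requires the POT for the closed immersion $i$ to be of the form $E[1]\to\bL_i$ with $E$ a \emph{single} locally free sheaf, and it then yields $i^! i_*=\Lambda_{-1}(E)$. It never produces $\Lambda_{-1}$ of a virtual class. Since $(i^*E^\bullet)^{\mov}$ is a two-term complex $[N^{-1}\to N^0]$, not a single bundle, you cannot feed it directly into the excess formula.

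What the paper does, and what your sketch is missing, is the following two-step maneuver. First, the POT for $i$ is taken to be $N^0[1]\to\bL_i$, so excess intersection gives $i^! i_*=\Lambda_{-1}^T(N^0)$. Second, to make the compatible triple close up, the POT $\phi^T$ on $X^T$ must be \emph{modified} to
\[
\widetilde{\phi^T}\colon (i^*E^\bullet)^{\fix}\oplus N^{-1}[1]\longrightarrow \bL_{X^T},
\]
because the distinguished triangle in the top row is $(i^*E^\bullet)^{\fix}\oplus[N^{-1}\to N^0]\to (i^*E^\bullet)^{\fix}\oplus N^{-1}[1]\to N^0[1]$. Functoriality then gives $i^!\cO_X^{\vir}=\cO_{\widetilde{\phi^T}}$, and a separate lemma (Lemma \ref{add} in the paper, the $K$-theoretic analogue of \cite[Lemma 1]{GP}) shows $\cO_{\widetilde{\phi^T}}=\Lambda_{-1}^T(N^{-1})\cdot\cO_{\phi^T}$. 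Only after dividing $\Lambda_{-1}^T(N^{-1})$ by $\Lambda_{-1}^T(N^0)$ does the virtual normal bundle $[{N^{\vir}}^\vee]=[N^0]-[N^{-1}]$ appear. Your proposal collapses these two distinct contributions into one, and in particular omits both the modification $\widetilde{\phi^T}$ and the lemma relating $\cO_{\widetilde{\phi^T}}$ to $\cO_{\phi^T}$.
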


\begin{proof}
If we modify the POT for $X^T$ to  
\[
\widetilde{\phi^T}\colon 
(i^*E^{\bullet})^\fix\oplus N^{-1}[1] \to (i^*E^{\bullet})^\fix \to  \bL_{X_T},
\]
then we have a compatible triple
between POTs
\[
\xymatrix{
i^* E^{\bullet}\ar[r]\ar[d]  
  &(i^*{E^{\bullet}} )^\fix\oplus N^{-1}[1] \ar[r]\ar[d]
    & N^0[1]\ar[d]\\
i^*\bL_X          \ar[r]&   \bL_{X^T}  \ar[r]&                      \bL_{i}~ ,
}
\]
here the first row is the direct sum of 
$(i^* E^{\bullet})^\fix  \to (i^*{E^{\bullet}} )^\fix \to 0$ and 
$(i^* E^{\bullet})^\mov  \to N^{-1}[1] \to N^0[1] $.

Let $i_T^{!}$ be the virtual pullback induced by 
$N^0[1] \to \bL_i$.

By Proposition \ref{prop:func}, we have
\[ i_T^{!}\cO_\phi= \cO_{\widetilde{\phi^T}}.
\] 
Since \[ {i_T}_* \colon K^T_0(X^T) \to K^T_0(X)\] as a map between
$K^T_0(\Spec\bC)$ module becomes an isomorphism after tensoring with
$\bQ(t)$ by, e.g., \cite[Theorem 3.3 (a)]{EG}, we see that  $\displaystyle \frac{ i_T^{!}}{ \Lambda^T_{-1}(N^0)}$ is an inverse to $ {i^{}_T}_*$ by Proposition \ref{prop:excess}.

By Lemma \ref{add} below, 
\[  \cO_{\widetilde{\phi^T}} =
     \Lambda_{-1}^T(N^{-1}) \cdot \cO_{\phi^T}
\]

Combine the results above, we see that 

\begin{align*}
\cO_\phi 
& =  {i^{}_T}_* \lp \frac{ i_T^{!} \cO_\phi}{ \Lambda_{-1}^T(N^0) }\rp \\
&={i^{}_T}_*\lp \frac{ \Lambda_{-1}^T (N^{-1}) \cdot  \cO_{\phi^T} }{ \Lambda_{-1}^T (N^0)}\rp,
\end{align*}
and 
this is the same as 
\[
\cO_X^\vir = {i_T}_{*}\lp \frac{\cO_{X^T}^\vir }{ \Lambda_{-1}^T( [{N^{\vir}}^{\vee}])} \rp.
\]
\end{proof}

\begin{remark} For a $T$ scheme $X$,
if $L$ is a line bundle over $X^T$ of nonzero weight $k$, then
$\Lambda_{-1}^T(L) =1 -t^kL$ is invertible in $K(X^T)\otimes_{\bQ} \bQ(t)$.
As $1-t^k L =1-t^k - (L-1)t^k $, $1-t^k$ is invertible, and $L -1$ is nilpotent. 
\end{remark}

\begin{rmk}
 Let $U$ be the complement of $X^T$ in $X$.
 To extend the formula to DM stacks,  what we need is 
$K^T_0(U)\otimes_{Q[t^{\pm 1}]} Q(t)=0$. (Or some other ring in place of $Q(t)$.)
This is certainly true with enough hypotheses. For example, if Riemann-Roch holds for
$[U/T]$, giving an isomorphism between 
$K^T_0(U)$ and $A_*(I[U/T])$,  the Chow group of the inertia stack of $[U/T]$, 
then $1-t$ is nilpotent on $K^T_0(U)$, as the Chern character of $1-t$ is nilpotent on
$A_*(I[U/T])$.

\end{rmk}


\begin{lem}[cf.{ \cite[Lemma 1]{GP}}] \label{add}
Given a POT $\psi: F^{\bullet} \to \bL_f$ for a DM morphism $f\colon X \to Y$ and a locally free sheaf $E$ over X, the map 
$\psi' \colon F^{\bullet}\oplus E[1] \to F^{\bullet}\to \bL_f$
induces a POT for $f$,
here $F^{\bullet}\oplus E[1] \to F^{\bullet}$ is projection onto the first factor.
The two virtual structure sheaves are related by
\[\cO_{\psi'} =\Lambda_{-1}(E) \cdot  \cO_{\psi}.\]
\end{lem}

\begin{proof}
Let $C_f$ be the intrinsic normal cone of $X$,
$C(E)= \operatorname{Spec}\operatorname{Sym}E$ the cone associated with $E$,
and $\fF$  the vector bundle stack $h^1/h^0({F^\bullet}^{\vee})$ associate to $F^\bullet$.
Then the closed imbedding $C_f \to \fF\times_X C(E)$ induced by $\psi'$ is the composition of
the closed imbedding $C_f \to \fF$ induced by $\psi$ and the closed embedding
$\fF \to \fF\times_X C(E)$ induced by the zero section $X \to C(E)$.
Consider the cartesian diagram
\[\xymatrix{
X \ar[r]^{0_E}\ar[d]^{0_\fF}           &   C(E)\ar[d]^{0_\fF'}\\
\fF \ar[r]^-{0_E'} & \fF \times_X C(E)
}\]
By definition, we have 
$\cO_{\psi} = 0_\fF^!(\cO_{C_f})$ and 
$\cO_{\psi'} = (0_{\fF'}\circ 0_E)^!(0_E')_*(\cO_{C_f})$.
Here $0_\fF^!$ and $(0_{\fF'}\circ 0_E)^!$ are Gysin pullbacks along the zero section of 
$X \to \fF$ and $0_{\fF'}\circ 0_E \colon X \to \fF \times_X C(E)$ respectively.
As \[(0_{\fF'}\circ 0_E)^! =  (0_E)^! \circ 0_{\fF'}^!,\,\text{and} \, 0_{\fF'}^! \circ (0_E')_*  = (0_E)_* \circ 0_\fF^!, \]
we see that 
\[
\cO_{\psi'} = (0_E)^! \circ 0_{\fF'}^! \circ (0_E')_*(\cO_{C_f}) = (0_E)^! \circ (0_E)_* \circ 0_\fF^! (\cO_{C_f}) =
\Lambda_{-1}(E) \cdot \cO_{\psi}.
\]

\end{proof}

\section{A Degeneration formula in DT theory}\label{s4}
In this section, the base field $k$ is $\bC$, the field of complex numbers.

It is straightforward to adapt the arguments in \cite{LW,MPT} to write down a degeneration formula in DT theory.
The difference between the $K$-theoretic version and the Chow version comes from formal group laws, and this is the content of \cite[Lemma 3]{L}.

\subsection{Setup}
We recall the setup in \cite{LW}.
\subsubsection{Simple degenerations}
Let $\pi\colon X \to C$ be a projective morphism from a smooth variety $X$ to a smooth pointed curve $(C, 0)$ such that fibers outside $0$ are smooth, and the fiber over $0$,
$X_0$, is a pushout  $Y_+\coprod_D Y_-$, where $Y_+, Y_-$ are smooth varieties, and
$D$ is a connected smooth divisor in both $Y_+$ and $Y_-$. We will denote 
$D$ by $D_+$ or $D_-$ when it is viewed as a divisor in $Y_+$ or $Y_-$.

Let $N_+$ be the normal bundle of $D_+$ in $Y_+$, and $\Delta=\bP_D(N_+\oplus \cO)$. Denote the zero and infinity section of $\Delta$ by $D_+$ and $D_-$ respectively,
so that the normal bundle $N_{D+/\Delta}=N_+$.

\begin{rmk}
Let $N_-$ be the normal bundle of $D_-$ in $Y_-$, then $N_+ \otimes N_- \simeq \cO_D$. In order to define $\Delta$ the choice of $N_+$ or $N_-$ doesn't matter.
In fact, it is enough to start with $X_0$ as a pushout $Y_+\coprod_D Y_-$ assuming $N_{D/Y_+}\otimes N_{D/Y_-}\simeq \cO_D$.
\end{rmk}

\subsubsection{Expanded degenerations}
Expanded degenerations are introduced in \cite{Li1}, see \cite[Section 2.5]{GV}  for non-rigid expansions or rubbers. An extensive discussion can be found in 
\cite{ACFW}.

We recall expanded degenerations associated to $X \to C$, relative pairs $(Y_\pm, D_\pm)$, and 
non-rigid expansions of $(D,N_+)$, which will be denoted by
 $\fX\to \fC$,  $(\fY_-, \fD_-) \to \cT$, $(\fD_+, \fY_+) \to \cT$, and 
$(\fD_+, \fY_\sim, \fD_-) \to \cT_\sim$. Expansions of $X_0$ is given by $\fX_0 \to \fC_0$, the fiber of $\fX\to \fC$ over $0\in C$. 
\begin{rmk}
$\cT,\cT_\sim$ are the same as those in \cite{GV,ACFW}. Note that $\fC_0$ is independent of $\fC$, it is the same as $\fT_0$ in \cite{ACFW}.
\end{rmk}

We have the universal family $\fX \to \fC$ of expanded degenerations associated to the family $\pi\colon X \to C$, its singular fibers are expansions of $X_0$ of the form
$X_0[n]$, where
\[
X_0[n]=Y_-\coprod_{D_- =D_+} \Delta_1 \coprod_{D_- =D_+}  \cdots \Delta_n 
\coprod_{D_-=D_+} Y_+
\] and $\Delta_i$ are copies of $\Delta$.
There is a commutative diagram
\[
\xymatrix{
\fX \ar[r]\ar[d] & X\ar[d] \\
\fC \ar[r]         &  C
}
\] that is an isomorphism on smooth fibers and on singular fibers contracts the $\Delta_i$ in $X_0[n]$.

 For  the relative pair $(Y_-,D_-)$ and $(Y_+, D_+)$, the universal families of expanded
 degenerations are denoted 
 $(\fY_-, \fD_-) \to \cT$ 
 and $(\fD_+, \fY_+) \to \cT$ respectively. Recall an expansion of $(Y_-, D_-)$ is of the form
 \[
(Y_-[n], D_-[n])=Y_-\coprod_{D_- =D_+} \Delta_1 \coprod_{D_- =D_+}  \cdots \Delta_n.
\] where $D_-[n]$ is $D_-$ in $\Delta_n$.
We have commutative diagrams
\[
\xymatrix{
(\fY_\pm,\fD_\pm) \ar[r]\ar[d] & (Y_\pm, D_\pm) \\
\cT         &  \ ,
}
\] where $\fD_\pm\simeq \cT\times D_\pm$ over $\cT$, fiberwise $\Delta_i$ are contracted.

\begin{rmk}
Notationwise, $\mathfrak{A}_0$ in \cite{LW} is $\cT\times \cT$, so $(\fY_\pm, \fD_\pm)$ defined here differ from those defined in \cite{LW}
by a factor of $\cT$.
\end{rmk}

We also need the family $(\fD_+, \fY_\sim, \fD_-) \to \cT_\sim$ of nonrigid expanded degeneration 
associated to the pair $(D, N_+)$, 
fibers are of the form
\[
(D_+,\Delta[n]_\sim,  D_-)=\Delta_1 \coprod_{D_- =D_+}  \cdots \coprod_{D_-=D_+}\Delta_n.
\] where $D_+, D_-$ in $\Delta[n]_\sim$ comes from $\Delta_1, \Delta_n$ respectively.
The commutative diagram
\[
\xymatrix{
(\fD_+,\fY_\sim, \fD_-) \ar[r]\ar[d] & D \\
\cT_\sim         &  
}
\] is given fiberwise by projections $\Delta_i \to D$.

\begin{rmk}
The stacks 
$\cT$, $\cT_\sim$, and 
$\fC_0$, the fiber of $\fC \to C$ over $0$,
are algebraic stacks, having quasi-compact, separated diagonals, locally of finite type over $k$.
\end{rmk}

\subsubsection{Moduli spaces of admissible ideal sheaves}
Let $H$ be a $\pi$-ample line bundle on $X$.
We will consider moduli spaces of admissible ideal sheaves
\footnote{
Rank 1 torsion free sheaves with trivial determinants.} with finite automorphism groups on expanded degenerations, denoted in the form $\cM_{\#}^{\#}$,
where superscripts record Hilbert polynomials, and the subscript indicates the family over which the moduli space is considered.

\begin{rmk}
See (\cite[Section 3]{LW}) for discussions on admissibility.
\end{rmk}

For the family $\fX \to \fC$, as it is representable by a projective morphism, we know the Hilbert scheme of this family with Hilbert Polynomial $P$ (with respect to the pullback of $H$ to $\fX$) is an algebraic stack projective over $\fC$, the intersection of its maximal open DM substack and 
its open substack of admissible ideal sheaves is denoted by $\cM^P$.

Similarly, we have $\cM^P_-$, $\cM^{P}_\sim$, $\cM^P_+$, 
all these stacks are proper by {\cite[Theorem 4.14, 4.15]{LW}}.

The fiber of $\cM^P$
over $0 \in C$ is denote by $\cM_0^P$, it is 
the moduli space of admissible ideal sheaves on $\fX_0/\fC_0$ with finite automorphism groups.

From now on we will be only  interested in the case when $\deg P=1$.

Restricting to the divisor $\fD_-$ of $(\fY_-, \fD_-)$ induces an evaluation map
\[
\ev_-\colon \cM^P_- \to \hilb_D.
\]
Here $\hilb_D =\coprod_n \hilb_D^n$ is the Hilbert scheme of points on $D$.

Similarly, we have
\[
\ev_+\colon \cM_+^P\to \hilb_D
\] and
\[
(\ev_+^\sim, \ev_-^\sim) \colon \cM_\sim \to \hilb_D\times\hilb_D .
\]

Given a degree zero polynomial $Q_-$, denote by $\cM^{P,Q_-}_-$ 
the preimage of $\hilb_D^{Q_-}$ under $\ev_-$, where
$\hilb_D^{Q_-}$ is the 
open and closed scheme of $\hilb_D$
parametrizing ideal sheaves with Hilbert polynomial $Q_-$. 
Similarly, we have $\cM_+^{Q_+,P},\cM^{Q_+,P, Q-}_\sim$.

Let $\cM_0 =\coprod_{\deg P=1} \cM_0^P$ be the disjoint union, and similarly
we have $\cM_-,\cM_\sim, \cM_+$.

\subsubsection{Perfect obstruction theories}
Now we need to assume $\pi \colon X \to C$ is a family of 3-folds to ensure 
higher obstruction groups vanish so that we have perfect obstruction
theories.

Consider the family $\fX \to \fC$,  we have the moduli space $\cM\to \fC$ of admissible ideal sheaves. Denote by $\cI \subset \cO_{\cM \times_{\fC} \fX}$ its
universal ideal sheaf.
The dual of the perfect obstruction theory is given by
\[
\bT_{\cM/\fC} \to R{\pi_\cM}_*\rhom(\cI,\cI)_0[1] 
\] which is induced by the Atiyah class of $\cI$, 
where $\bT_{\cM/\fC}=\bL_{\cM/\fC}^{\vee}$ is the tangent complex of $\cM \to \fC$ and $\pi_\cM$
is the projection $\cM \times_{\fC} \fX \to \cM$. (See \cite[Section 4]{HT}, \cite[Propostion 6.1]{LW}.)
 
POTs for $\cM_0,\cM_-,\cM_\sim, \cM_+$ are defined in the same way.
We will use $\cO_{\cM_0}^\vir$, $\cO_{\cM_-}^\vir$, $\cO_{\cM_\sim}^\vir$, $\cO_{\cM_+}^\vir$ 
to denote their corresponding virtual structure sheaves.

\begin{rmk}
The tangent-obstruction complex is given by 
$\rhom(I,I)_0[1]$ at an ideal sheaf $I$.
On the smooth scheme $\hilb_D$, $\rhom(I,I)_0[1]$ is quasi-isomorphic to the tangent space of $I$ in $\hilb_D$ (\cite[p.912]{LW}).
\end{rmk}

\subsubsection{Decomposition of $\cM_0^P$}
There is a natural map
\[
\cT\times \cT \to \fC_0
\]
that pointwise corresponds to gluing 
$Y_-[n]$ and $Y_+[m]$ to 
\[
X_0[n+m] \simeq Y_-[n] \coprod_{D_-[n]=D_+[m]}Y_+[m].
\]
Similarly, we have
maps
\[
\gl_k\colon \cT\times
\underbrace{\cT_\sim \times \cdots \times\cT_\sim}_{k\, \mathrm{factors}}
\times \cT \to \fC_0
\]
that glue expansions of $Y_\pm$ and nonrigid expansions of $(D,N_+)$ to
expansions of $X_0$.

\begin{rmk}
Smooth locally, $\fC_0$ is given by the simple normal crossing divisor $\cup_{i=1}^n D_i$ in $\bA^n =\Spec k[x_1,x_2,...,x_n]$, where $D_i$ is the smooth divisor $(x_i)$.
The map $\gl_k$ is then given by
\[
\coprod_{J,  |J|=k+1} \cap_{j \in J} D_j \to \cup D_i \subset \bA^n.
\]
In particular, the maps $\gl_k, k\ge 0$ are representable and finite.
\end{rmk}

It follows from the definition of fiber product that
the  diagram
\begin{equation}
\xymatrix{
 \cM_- \times_{\hilb_D}  \cM_\sim 
 \times_{\hilb_D} \cdots \cM_\sim \times_{\hilb_D} \cM_+
 \ar[r]^-{\kth}\ar[d]
 &  \cM_0\ar[d]\\
\cT\times \cT_\sim \times \cdots \times\cT_\sim \times \cT \ar[r]^-{\gl_k}
   &\fC_0 
}
\end{equation} is cartesian.
For example, when $k=1$, the fiber product is given by the fiber product
\[
\xymatrix{
&& \cM_+ \ar[d]^-{\ev_+}\\
& \cM_\sim \ar[r]^-{\ev_-^\sim} \ar[d]^-{\ev_+^\sim} &\hilb_D\\
\cM_-\ar[r]^-{\ev_-} & \hilb_D .
}
\]

For ease of notation, we abbreviate $\cT\times \cT_\sim \times \cdots \times\cT_\sim \times \cT$ to $\fC_0[k]$
 and 
 $
 \cM_- \times_{\hilb_D}  \cM_\sim 
 \times_{\hilb_D} \cdots \cM_\sim \times_{\hilb_D} \cM_+
$ to $\cM_0[k]$.

If we use $\cM_0^P$ in place of $\cM_0$, then the fiber product is the disjoint union of
\[
 \cM_-^{P_0, Q_0} 
 \times_{\hilb_D^{Q_0}}  
 \cM_\sim^{ Q_0,P_1, Q_1} 
 \times_{\hilb_D^{Q_1}}
  \cdots 
  \cM_\sim^{Q_{k-1},P_k,Q_k} 
  \times_{\hilb_D^{Q_k}} \cM_+^{Q_k, P_\infty}
\]
over all $(P_0, P_1, \dots P_k, , Q_0, \dots, Q_k,P_\infty)$
such that $\sum_{i=0}^k P_i - \sum_{j=0}^k Q_j+P_\infty=P$.

For a tuple $\delta=(P_0, P_1, \dots P_k, , Q_0, \dots, Q_k,P_\infty)$, 
let 
\[
k(\delta)=k,
\]
 and 
\[P(\delta)=\sum_{i=0}^k P_i - \sum_{j=0}^k Q_j+P_\infty.\]
We denote the module space
\[
 \cM_-^{P_0, Q_0} 
 \times_{\hilb_D^{Q_0}}  
 \cM_\sim^{ Q_0,P_1, Q_1} 
 \times_{\hilb_D^{Q_1}}
  \cdots 
  \cM_\sim^{Q_{k-1},P_k,Q_k} 
  \times_{\hilb_D^{Q_k}} \cM_+^{Q_k, P_\infty}
\] by $\cM_\delta$ and 
 and the gluing map
\[
\cM_\delta \to \cM_0^P.
\] by $\iota_\delta$. 
\begin{rmk}
Given $P$ there are only finitely many $\delta$ such that $\delta(P)=\delta$ and $\cM_\delta$ is nonempty.
\end{rmk}
\subsection{A Degeneration Formula}

Consider the diagram
\[
\xymatrix{
\cM_0[k] \ar[r]\ar[d] 
 & \cM_- \times \underbrace{\cM_\sim \times \cdots \cM_\sim}_{k\ \mathrm{factors}} \times \cM_+  \ar[d]
\\
 \hilb_D^{\times  k} \ar[r]^-{\Delta_{\hilb_D}^{\times  k}} \ \
 & (\hilb_D \times \hilb_D)^{\times  k}.
}
\]
Recall \[
\cM_0[k] =\coprod _{\delta, k(\delta)=k}\cM_\delta,\] 
denote the  component
of 
the Gysin pullback
\[
(\Delta_{\hilb_D}^{\times  k})^! 
(\cO_{\cM_-}^\vir 
 \boxtimes \cO_{\cM_\sim}^\vir
 \boxtimes \cdots
 \cO_{\cM_\sim}^\vir 
 \boxtimes \cO_{\cM_+}^\vir)
\]
on $\cM_\delta$ by $\cO_{\cM_\delta}^\vir$, where

 Consider the diagram
\[
\xymatrix{
\cM_0[k] \ar[r]\ar[d]  &  \cM_0 \ar[d] \\
\fC_0[k]  \ar[r]          & \fC_0 ,
}
\] where $k\ge 0$.
The POT of $\cM_0\to \fC_0$ induces a virtual pullback, 
and 
$\cO_{\cM_0}^\vir$ is obtained by pulling back $\cO_{\fC_0}$.

The degeneration formula is 
\begin{thm}
Let $X \to C$ be a simple degeneration of 3 folds, $P$ a degree $1$ polynomial, 
For any $\delta=(P_0, P_1, \dots P_k, , Q_0, \dots, Q_k,P_\infty)$ satisfiying $\delta(P)=P$ and $\cM_\delta$ nonempty,
let $
\cO_{\cM_\delta}^\vir \in K_0(\cM_\delta)$ be
\[
( \Delta_{\hilb_D}^{\times  k})^! 
(\cO_{\cM_-^{P_0, Q_0}}^\vir 
 \boxtimes \cO_{\cM_\sim^{Q_0,P_1,Q_1}}^\vir
 \boxtimes \cdots
 \cO_{\cM_\sim^{Q_{k-1},P_k.Q_k}}^\vir 
 \boxtimes \cO_{\cM_+^{Q_k,P_\infty}}^\vir),
\]
 then we have
\[
\sum_{k=0}^\infty \sum_{\substack{
\delta\\ P(\delta)=P\\ k(\delta)=k}
} (-1)^k(\iota_\delta)_*\cO_{\cM_\delta}^\vir =\cO_{\cM_0^P}^\vir
\] in $K_0(\cM_0^P)$. Note that by the boundedness of $\cM_0^P$, the left hand side is a finite sum.
\end{thm}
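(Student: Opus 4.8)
The plan is to deduce the formula from an elementary inclusion--exclusion identity on the base $\fC_0$ by applying the virtual pullback of $\cM_0^P \to \fC_0$ and invoking its bivariance. Write $q\colon \cM_0^P \to \fC_0$ for the structure map; its perfect obstruction theory (the restriction to the component of Hilbert polynomial $P$ of the one on $\cM_0\to\fC_0$) defines a bivariant class $q^!\in\opk(\cM_0^P\xrightarrow{q}\fC_0)$, and by construction $\cO_{\cM_0^P}^\vir=q^!(\cO_{\fC_0})$. Each gluing map $\gl_k\colon \fC_0[k]\to\fC_0$ is representable and finite, hence proper DM, so that, because the square relating $\cM_0^P\times_{\fC_0}\fC_0[k]=\coprod_{\delta:\,P(\delta)=P,\,k(\delta)=k}\cM_\delta$, $\cM_0^P$, $\fC_0[k]$ and $\fC_0$ is cartesian and $q^!$ commutes with proper DM pushforwards,
\[
q^!\bigl((\gl_k)_*\cO_{\fC_0[k]}\bigr)=\sum_{\substack{\delta\\ P(\delta)=P\\ k(\delta)=k}}(\iota_\delta)_*\bigl(q^!_{\gl_k}\cO_{\fC_0[k]}\big|_{\cM_\delta}\bigr),
\]
where $q^!_{\gl_k}$ is the refined virtual pullback and $q^!_{\gl_k}\cO_{\fC_0[k]}$ is the virtual structure sheaf of the fibre product with its base-changed obstruction theory.

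The base identity I would establish is $\cO_{\fC_0}=\sum_{k\ge 0}(-1)^k(\gl_k)_*\cO_{\fC_0[k]}$ in $K_0(\fC_0)$. Smooth-locally $\fC_0$ is the simple normal crossing divisor $\bigcup_{i=1}^n D_i\subset\bA^n$ and $\gl_k$ is the finite map $\coprod_{|J|=k+1}\bigcap_{j\in J}D_j\to\bigcup_i D_i$, so $(\gl_k)_*\cO_{\fC_0[k]}$ is the class of $\bigoplus_{|J|=k+1}\cO_{D_J}$ with $D_J=\bigcap_{j\in J}D_j$. The identity is then the \v{C}ech/Mayer--Vietoris resolution of the structure sheaf of a normal crossing divisor,
\[
0\to\cO_{\cup_i D_i}\to\bigoplus_{|J|=1}\cO_{D_J}\to\bigoplus_{|J|=2}\cO_{D_J}\to\cdots\to\cO_{D_{\{1,\dots,n\}}}\to 0,
\]
whose exactness is local and hence globalizes, yielding the asserted alternating sum on $\fC_0$. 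Applying $q^!$ to this identity and using the display above reduces the theorem to one statement: after restriction to each $\cM_\delta$, the base-changed virtual structure sheaf $q^!_{\gl_k}\cO_{\fC_0[k]}$ agrees with $\cO_{\cM_\delta}^\vir$ as defined by the diagonal Gysin pullback $(\Delta_{\hilb_D}^{\times k})^!$ of the exterior product of the $\cO_{\cM_-}^\vir,\cO_{\cM_\sim}^\vir,\cO_{\cM_+}^\vir$.

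This last comparison is the main obstacle, and it is where the genuine Donaldson--Thomas input enters. One must show that the obstruction theory $R\pi_*\rhom(\cI,\cI)_0[1]$ on the glued family is compatible, as a perfect obstruction theory, with the product of the obstruction theories of the pieces corrected by the Gysin pullback along the diagonals of $\hilb_D$; concretely this is a normalization/Mayer--Vietoris triangle for $R\pi_*\rhom(\cI,\cI)_0$ along the relative divisor $D$, exactly as in \cite{LW,MPT}. Granting this, functoriality of virtual pullbacks (Proposition \ref{prop:func}) together with their commutativity with the Gysin pullbacks $(\Delta_{\hilb_D}^{\times k})^!$ identifies $q^!_{\gl_k}\cO_{\fC_0[k]}\big|_{\cM_\delta}$ with $\cO_{\cM_\delta}^\vir$. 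The sole point at which the $K$-theoretic argument departs from the Chow-theoretic one is the bookkeeping of the normal-bundle contributions of the expansions, which combine according to the multiplicative formal group law of $K$-theory rather than additively; this is supplied by \cite[Lemma 3]{L}. Finally, boundedness of $\cM_0^P$ guarantees that only finitely many $\delta$ with $P(\delta)=P$ yield nonempty $\cM_\delta$, so every sum is finite and no convergence issue arises.
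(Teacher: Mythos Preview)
Your argument is correct and follows exactly the paper's strategy: establish the inclusion--exclusion identity on the base $\fC_0$, then apply the virtual pullback $q^!$ using its compatibility with proper pushforward, and finally identify the base-changed virtual sheaf with the diagonal Gysin construction via the compatibility of obstruction theories from \cite{LW,MPT} and Proposition~\ref{prop:func}.

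One point of clarification: your \v{C}ech/Mayer--Vietoris derivation of $\cO_{\fC_0}=\sum_{k\ge 0}(-1)^k(\gl_k)_*\cO_{\fC_0[k]}$ \emph{is} the content of \cite[Lemma~3]{L}; that lemma is precisely the base identity, not an additional normal-bundle correction. So your closing sentence misplaces the citation --- once you have the \v{C}ech identity on $\fC_0$ and the obstruction-theory compatibility, no further ``bookkeeping of normal-bundle contributions'' is required. The multiplicative formal group law manifests itself exactly in the fact that the $K$-theoretic class of a normal crossing divisor is the alternating sum over all strata rather than just the top ones, which is what your \v{C}ech resolution already records.
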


\begin{proof}
It follows from 
the arguments in \cite[Proposition 6.5]{LW}, \cite[3.9]{MPT}
and the functoriality of virtual pullbacks that 
$\cO_{\cM_0[k]}^\vir =\coprod \cO_{\cM_\delta}^\vir$ can be identified with the virtual pullback of  $\cO_{\fC_0[k]}$.

By \cite[Lemma 3]{L}, we have
\[
\sum_{k=0}^\infty (-1)^k(\gl_k)_*\cO_{\fC_0[k]} = \cO_{\fC_0}.
\]
Then by commutativity between virtual pullbacks and proper pushforwards, the theorem is proved.
\end{proof}

\begin{rmk}
We have deformation invariance for the family $\cM_\fC$.
Denote by $i_c$ the regular imbedding of the closed point $c$ to $C$, and form a cartesian diagram as follows:
\[
\xymatrix{
\cM_t^P \ar[r]\ar[d]  &   \fC_c \ar[r]\ar[d] & c \ar[d]^{i_c}  \\
\cM^P  \ar[r]   &  \fC\ar[r]              &  C.
}
\]
Then 
\[
i_c^! \cO_{\cM^P}^\vir =\cO_{\cM_c^P}^\vir.
\]
For $c\ne 0$, $\fC_c$ is a point, and $\cM_c^P$ is the DT moduli space of ideal sheaves on the smooth 3 fold $X_c$ with virtual structure sheaf $\cO_{\cM_c^P}^\vir$.

\end{rmk}

\begin{rmk}
As $\fC_0$ is not quasi-compact, we need to 
use virtual pullbacks explained in subsubsection \ref{rmk}.
This is possible because $\cM_0^P$ is bounded.
\end{rmk}

\subsection*{Acknowlegements}
We would like to thank 
Q. Chen,
A. Kresch,
C. Manolache,
P. Sun,
J. Wise, W. Zheng
for correspondence and/or conversations.

\end{document}